\theoremstyle{plain} %text of this environment is typesetted in italics
\newtheorem{theorem}{\indent\sc Theorem}[section]
\newtheorem{lemma}[theorem]{\indent\sc Lemma}
\newtheorem{corollary}[theorem]{\indent\sc Corollary}
\newtheorem{proposition}[theorem]{\indent\sc Proposition}
\theoremstyle{definition} %text of this environment is typesetted in roman letters
\newtheorem{definition}[theorem]{\indent\sc Definition}
\newtheorem{remark}[theorem]{\indent\sc Remark}
\newcommand{\C}{\mathbb{C}}
\newcommand{\R}{\mathbb{R}}
\newcommand{\Q}{\mathbb{Q}}
\newcommand{\Z}{\mathbb{Z}}
\newcommand{\N}{\mathbb{N}}
\title{Pad\'{e} approximations for products of functions}
\author{\textsc{Makoto Kawashima}}
\date{ } %leave empty
\begin{document}

\maketitle  

%\tableofcontents  
%\footnote{ %key words and phrases
%\footnote{ %acknowledgment of support etc. if any
%$^{*}$Thanks.
%}
%%%%%%%%%%%%%%%%%%%%%%%%%%%%%%%%%%%%%%%%%
 
\begin{abstract}
In this article, we construct new Pad\'{e} approximations for the \emph{product} of binomial functions and powers of logarithmic functions. While several explicit Pad\'{e} approximants are known for powers of exponential functions, binomial functions, and logarithmic functions individually, an explicit Pad\'{e} construction for the product of these functions has not yet been directly achieved.
Our main result yields arithmetic applications, providing new linear independence measures for linear forms in $(1+\alpha)^{\omega_i}\log^{j_i}(1+\alpha)$ for $1 \le i \le m$ and $0 \le j_i \le r_i - 1$, where $0 < m, r_1, \ldots, r_m \in \mathbb{Z}_{\geq 1}$, $\omega_1, \ldots, \omega_m \in \mathbb{Q}$, and $0 \le \omega_1 < \cdots < \omega_m < 1$. These results hold with algebraic coefficients in both the complex and $p$-adic cases. Additionally, we establish that Pad\'{e} approximation of a single polylogarithm is, in general, perfect.
\end{abstract}
\textit{Key words and phrases}:~Pad\'e approximation, product of functions, perfectness, logarithm, binomial function and polylogarithm.

\section{Introduction}
As is well-known, Hermite-Pad\'{e} approximation is a central tool in Diophantine problems, providing explicit and effective results on transcendence \cite{FN}, as well as irrationality \cite{S1}. 
Despite its extensive application, explicit Hermite-Pad\'{e} approximations have been relatively scarce, particularly for functions expressed as products of distinct functions.

In this article, we explicitly construct Pad\'{e} approximations for the \textit{product} of two distinct hypergeometric functions, namely, binomial functions and powers of logarithmic functions. K.~Mahler's contributions include Pad\'{e} approximations for powers of exponential functions \cite{M0, M1}, binomial functions \cite{M}, and powers of logarithmic functions \cite{M2}, all applied to Diophantine problems. Subsequent refinement of Mahler's works has been undertaken by various mathematicians, notably G.~V.~Chudnovsky for binomial functions \cite{ch1, ch9}, and for powers of logarithmic functions by Chudnovsky \cite{chpi} and E.~Reyssat \cite{R}. M. Huttner \cite{Huttner} also pursued Pad\'{e} constructions for specific products through the monodromy method, though without direct arithmetic applications.

Our method takes a completely formal approach, treating every power series formally, which distinguishes it from previous works. While the basic structure of the proof is classical, due to Ch.~Hermite, H.~E.~Pad\'{e}, and C.~L.~Siegel \cite{S}, our new ingredient lies in the explicit construction of Pad\'{e} approximations for the product of binomial functions and powers of logarithm functions, invoking a linear independence measure of associated numbers. To achieve this construction, the key point is an entire transformation of Pad\'{e} approximations of exponential functions.

An advantage of our approach, thanks to the perfectness of exponential functions in the sense of Mahler ({\it confer} \cite{M4}), is that we obtain the normality of certain indices of Pad\'{e} approximants for the product of binomial functions and powers of logarithm functions. This, in turn, leads to the regularity of the matrices associated with Pad\'{e} approximants (see Corollary $\ref{key corollary}$), which is a crucial step in completing the proof.

\section{Notation and Statement of the Main Result}
We collect here the notation used throughout this article.

We denote by $\mathbb{N}$ the set of strictly positive integers.  
Let $K$ be an algebraic number field. We denote by $\mathfrak{M}_K$ the set of all places of $K$, by $\mathfrak{M}_K^{\infty}$ the set of infinite places, and by $\mathfrak{M}_K^f$ the set of finite places.

For $v \in \mathfrak{M}_K$, let $K_v$ denote the completion of $K$ at $v$.  
We define the normalized absolute value $|\cdot|_v$ on $K$ as follows:
\begin{align*}
|p|_v &= p^{-\tfrac{[K_v:\mathbb{Q}_p]}{[K:\mathbb{Q}]}} 
\quad \text{if } v \in \mathfrak{M}_K^f \text{ and } v \mid p, \\
|x|_v &= |\iota_v x|^{\tfrac{[K_v:\mathbb{R}]}{[K:\mathbb{Q}]}} 
\quad \text{if } v \in \mathfrak{M}_K^{\infty},
\end{align*}
where $p$ is a rational prime and $\iota_v: K \hookrightarrow \mathbb{C}$ is the embedding corresponding to $v$.

Let $m$ be a non-negative integer and $\boldsymbol{\beta} = (\beta_0, \ldots, \beta_m) \in K^{m+1} \setminus \{ \mathbf{0} \}$.  
We define the (absolute) multiplicative height of $\boldsymbol{\beta}$ by
\[
\mathrm{H}(\boldsymbol{\beta}) = \prod_{v \in \mathfrak{M}_K} \max\{1, |\beta_0|_v, \ldots, |\beta_m|_v\},
\]
and the logarithmic absolute Weil height by
\[
\mathrm{h}(\boldsymbol{\beta}) = \log \mathrm{H}(\boldsymbol{\beta}).
\]
We write
\[
\mathrm{h}_v(\boldsymbol{\beta}) := \log \max\{1, |\beta_0|_v, \ldots, |\beta_m|_v\},
\]
so that
\[
\mathrm{h}(\boldsymbol{\beta}) = \sum_{v \in \mathfrak{M}_K} \mathrm{h}_v(\boldsymbol{\beta}).
\]

For a finite set $S$ of algebraic numbers, we define its denominator by
\[
\mathrm{den}(S) := \min \left\{ n \in \mathbb{N} \mid n\alpha \text{ is an algebraic integer for all } \alpha \in S \right\}.
\]

\bigskip

We now introduce further notation needed to state our main result.
Let $m, r_1, \ldots, r_m \in \mathbb{N}$, let $K$ be an algebraic number field, and let $v_0$ be a place of $K$. Define $r := \max\{r_1, \ldots, r_m\}$, and set
\[
\varepsilon_{v_0} := 
\begin{cases}
1 & \text{if } v_0 \in \mathfrak{M}_K^{\infty}, \\
0 & \text{if } v_0 \in \mathfrak{M}_K^f.
\end{cases}
\]
For $v_0 \in \mathfrak{M}_K^f$, let $p_{v_0}$ denote the rational prime below $v_0$.

Let $\omega_1, \ldots, \omega_m \in \mathbb{Q}$ with $0 \leq \omega_1 < \cdots < \omega_m < 1$.  
Define the following real-valued functions:
\begin{align*}
\mathbb{A}_{v_0}(\boldsymbol{\omega}, \alpha) &= 
\left(\sum_{i=1}^m r_i - 1 \right) \mathrm{h}_{v_0}(\alpha)
- \frac{\varepsilon_{v_0} [K_{v_0} : \mathbb{R}]}{[K:\mathbb{Q}]} \, \mathrm{C}(\boldsymbol{\omega}) \\
&\qquad - (\varepsilon_{v_0} - 1)
\begin{cases}
0 & \text{if } p_{v_0} \nmid \mathrm{den}(\omega_i)_{1 \leq i \leq m}, \\
\displaystyle \frac{\left(\sum_{i=1}^m r_i\right)p_{v_0}}{p_{v_0} - 1} \log |p_{v_0}|_{v_0} & \text{otherwise},
\end{cases} \\
\mathbb{B}_{v_0}(\boldsymbol{\omega}, \alpha) &= 
\left(\sum_{i=1}^m r_i - 1 \right)\left( \mathrm{h}(\alpha) + \mathrm{C}(\boldsymbol{\omega}) \right)
- \mathrm{h}_{v_0}(\alpha)
- \frac{\varepsilon_{v_0} [K_{v_0} : \mathbb{R}]}{[K:\mathbb{Q}]} \, \mathrm{C}(\boldsymbol{\omega}), \\
U_{v_0}(\boldsymbol{\omega}, \alpha) &= \mathrm{h}_{v_0}(\alpha) + \frac{\varepsilon_{v_0} [K_{v_0} : \mathbb{R}]}{[K:\mathbb{Q}]} \, \mathrm{C}(\boldsymbol{\omega}), \\
V_{v_0}(\boldsymbol{\omega}, \alpha) &= \mathbb{A}_{v_0}(\boldsymbol{\omega}, \alpha) - \mathbb{B}_{v_0}(\boldsymbol{\omega}, \alpha),
\end{align*}
where $\varphi$ is Euler's totient function, and
\[
\mathrm{C}(\boldsymbol{\omega}) :=
\left( \sum_{i=1}^m r_i + 1 \right)\log 2 
+ 3r \sum_{1 \leq i_1 < i_2 \leq m} \left(\dfrac{{\rm{den}}(\omega_{i_2}-\omega_{i_1})}{\varphi({\rm{den}}(\omega_{i_2}-\omega_{i_1}))}\sum_{\substack{1\le j \le {\rm{den}}(\omega_{i_1}-\omega_{i_2}) \\ (j,{\rm{den}}(\omega_{i_2}-\omega_{i_1}))=1}}\dfrac{1}{j}\right)+ r.
\]
%with the convention that the summation is $0$ when $m=1$.

\bigskip

We can now state our main theorem.

\begin{theorem}\label{power of log indep}
Assume
\begin{align*}
&|\alpha|_{v_0} >
\begin{cases}
1 & \text{if } v_0 \in \mathfrak{M}_K^{\infty} \text{ or } v_0 \in \mathfrak{M}_K^f \text{ and } p_{v_0} \nmid \mathrm{den}(\omega_1, \ldots, \omega_m), \\
|p_{v_0}|_{v_0}^{-p_{v_0}/(p_{v_0} - 1)} & \text{otherwise},
\end{cases}\\
&V_{v_0}(\boldsymbol{\omega}, \alpha) > 0.
\end{align*}
Then for any $0 < \varepsilon < V_{v_0}(\boldsymbol{\omega}, \alpha)$, there exists an effectively computable constant $H_0 = H_0(\varepsilon)$ depending only on $\varepsilon$ and the given data such that the following holds.

For any $\boldsymbol{\beta} = (\beta_{i,j_i})_{1 \leq i \leq m,\ 0 \leq j_i \leq r_i - 1} \in K^{r_1 + \cdots + r_m} \setminus \{\mathbf{0}\}$ with $\mathrm{H}(\boldsymbol{\beta}) \geq H_0$, we have
\[
\left| \sum_{i=1}^m \sum_{j_i=0}^{r_i - 1} \beta_{i,j_i} (1+\alpha^{-1})^{\omega_i} \log^{j_i}(1+\alpha^{-1}) \right|_{v_0}
> C(\boldsymbol{\omega}, \alpha, \varepsilon) \cdot H_{v_0}(\boldsymbol{\beta}) \cdot \mathrm{H}(\boldsymbol{\beta})^{-\mu(\boldsymbol{\omega}, \alpha, \varepsilon)},
\]
where
\begin{align*}
\mu(\boldsymbol{\omega}, \alpha, \varepsilon) &=
\frac{\mathbb{A}_{v_0}(\boldsymbol{\omega}, \alpha) + U_{v_0}(\boldsymbol{\omega}, \alpha)}{V_{v_0}(\boldsymbol{\omega}, \alpha) - \varepsilon}, \\
C(\boldsymbol{\omega}, \alpha, \varepsilon) &=
\frac{1}{2} \exp\left[
- \frac{
\left(V_{v_0}(\boldsymbol{\omega}, \alpha) - \varepsilon + \log 2 \right)
\left( \mathbb{B}_{v_0}(\boldsymbol{\omega}, \alpha) + U_{v_0}(\boldsymbol{\omega}, \alpha) + \varepsilon \right)
}{
V_{v_0}(\boldsymbol{\omega}, \alpha) - \varepsilon
}
\right].
\end{align*}
\end{theorem}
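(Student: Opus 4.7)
The plan is to follow the classical Hermite--Pad\'{e}--Siegel template: first build an explicit family of simultaneous Pad\'{e} approximants at $z=0$ for the system of formal power series
\[
f_{i,j}(z) := (1+z)^{\omega_i}\log^{j}(1+z), \quad 1 \le i \le m,\ 0 \le j \le r_i - 1,
\]
then specialize at $z=\alpha^{-1}$ (the hypothesis on $|\alpha|_{v_0}$ is exactly what places $\alpha^{-1}$ inside the common disk of $v_0$-convergence of all the $f_{i,j}$), and finally feed the resulting numerical simultaneous approximants into the standard transference argument for linear independence measures.

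The main obstacle will be the \emph{construction itself}. I would obtain the Pad\'{e} polynomials for the family $\{f_{i,j}\}$ from Hermite's explicit Pad\'{e} data for powers of the exponential function via the ``entire transformation'' announced in the introduction: informally, $(1+z)^{\omega_i}$ is $e^{\omega_i u}$ evaluated at $u=\log(1+z)$, and the logarithmic factors $\log^{j}(1+z)$ arise by differentiating in the parameter $\omega_i$ before (or after) that specialization. To obtain genuinely \emph{polynomial} approximants in $z$, rather than approximants with merely analytic coefficients, I would work from a contour-integral representation of Hermite's exponential approximants in the spirit of Chudnovsky and Reyssat, deform the contour appropriately, and read off the coefficient polynomials in $z$; the sharp vanishing order of the remainder at $z=0$, which controls the quality of the approximation, then reduces to a bookkeeping computation.

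The second step is the \emph{arithmetic and analytic analysis} of those approximants. At each place $v$ of $K$ I must extract three quantities: (a) the $v_0$-size of the remainder, giving the gain; (b) the sizes of the polynomial coefficients, which enter the height loss; and (c) a common denominator for all coefficients, which depends on the differences $\omega_{i_2}-\omega_{i_1}$. Contributions (a) and (b) together assemble into $\mathbb{A}_{v_0}(\boldsymbol{\omega},\alpha)$ and $\mathbb{B}_{v_0}(\boldsymbol{\omega},\alpha)$; (c) is what produces the combinatorial constant $\mathrm{C}(\boldsymbol{\omega})$, whose explicit shape, with the inner sum $\sum_{(j,\mathrm{den})=1}1/j$ weighted by $\mathrm{den}/\varphi(\mathrm{den})$, reflects the standard Chudnovsky-type denominator estimate for the expansion of $(1+z)^{\omega_{i_2}-\omega_{i_1}}$.

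The final step is the \emph{transference} to the linear form. A single Pad\'{e} vector is insufficient; the usual measure argument needs several linearly independent approximant vectors, equivalently the regularity of a suitable matrix of Pad\'{e} polynomials. Here I invoke the perfectness of the exponential Pad\'{e} system in the sense of Mahler, which under the above transformation descends to normality of the transformed family at a cofinal set of indices, so that Corollary~\ref{key corollary} yields an invertible Pad\'{e} matrix. With this in hand the proof ends classically: given a nonzero $\boldsymbol{\beta}$ with $\mathrm{H}(\boldsymbol{\beta})\ge H_0$, choose the order $n$ of the approximants so that the $v_0$-size of the remainder is calibrated against $\mathrm{H}(\boldsymbol{\beta})$; pair the putative small linear form $\Lambda=\sum_{i,j_i}\beta_{i,j_i}f_{i,j_i}(\alpha^{-1})$ against the rows of the invertible Pad\'{e} matrix to manufacture a nonzero algebraic number whose product-formula lower bound is incompatible with the upper bound forced by the smallness of $\Lambda$ together with the remainders, unless $|\Lambda|_{v_0}$ itself lies above the claimed threshold. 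Optimizing $n$ as a function of $\mathrm{H}(\boldsymbol{\beta})$ produces the explicit exponent $\mu(\boldsymbol{\omega},\alpha,\varepsilon)$ and the constant $C(\boldsymbol{\omega},\alpha,\varepsilon)$.
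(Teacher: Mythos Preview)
Your overall strategy---construct explicit Pad\'{e} approximants, estimate them arithmetically and analytically at every place, use perfectness of the exponential system to obtain an invertible Pad\'{e} matrix, then feed everything into a product-formula transference lemma---matches the paper exactly, including the endgame (your final paragraph is essentially Lemma~\ref{criterion} applied to the matrices $M_n$). The divergence is in your account of the \emph{construction} step.

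You propose producing the logarithmic factors by differentiating in the parameter $\omega_i$ and then extracting polynomial approximants via a contour-integral representation and contour deformation. The paper does neither. Its device is purely formal and combinatorial: one takes Hermite's Pad\'{e} approximant not of $(e^{\omega_i z})_i$ but of the \emph{longer} family $(e^{(\omega_i+k)z})_{1\le i\le m,\,0\le k\le n}$ at integer-shifted exponents, with a carefully matched weight vector $\boldsymbol{r}$. Writing each approximant polynomial as $\mathfrak{P}_{i,k}(z)=\sum_j p_{i,j,k}z^j$ and regrouping by the power of $z$,
\[
\sum_{i,k}\mathfrak{P}_{i,k}(z)\,e^{(\omega_i+k)z}
=\sum_{i,j}\Bigl(\sum_k p_{i,j,k}\,e^{kz}\Bigr)e^{\omega_i z}z^{j},
\]
one then applies the order-preserving substitution $\mathcal{T}:z\mapsto\log(1+z)$. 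The factor $e^{kz}$ becomes the \emph{polynomial} $(1+z)^k$, so $P_{i,j}(z):=\sum_k p_{i,j,k}(1+z)^k$ is a genuine polynomial in $z$, while $e^{\omega_i z}z^{j}$ becomes $(1+z)^{\omega_i}\log^{j}(1+z)$. The degree and order bookkeeping (Proposition~\ref{diagonal normality log}) shows that $\mathcal{T}$ is a bijection between weight-$\boldsymbol{r}$ Pad\'{e} systems for the exponentials and weight-$\boldsymbol{n}$ Pad\'{e} systems for the $f_{i,j}$, so normality transfers for free from Mahler's perfectness. No contour deformation enters the construction; the residue integral appears only afterwards, to \emph{estimate} the partial-fraction coefficients (Lemma~\ref{absolute value a}).

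Your differentiation-in-$\omega$ idea is not absurd, but as stated it is a gap: you have not explained why differentiating a Pad\'{e} relation in the parameter yields approximants that are \emph{polynomials in $z$} (a priori the $P_i$ depend on $\omega$ through the partial-fraction data, so $\partial_\omega$ hits them too), nor how normality of the resulting system would be certified. The paper's integer-shift-and-regroup trick sidesteps both issues simultaneously and is what makes Corollary~\ref{key corollary} available.
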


\begin{remark}
When $\alpha$ varies, the $\alpha$-dependent part of $V_{v_0}(\boldsymbol{\omega}, \alpha)$ is
\[
\mathrm{h}_{v_0}(\alpha) + \left(\sum_{i=1}^m r_i - 1\right) \left( \mathrm{h}_{v_0}(\alpha) - \mathrm{h}(\alpha) \right)
= \mathrm{h}_{v_0}(\alpha) - \left(\sum_{i=1}^m r_i - 1\right) \sum_{v \neq v_0} \mathrm{h}_v(\alpha),
\]
while the other terms are independent of $\alpha$.
Therefore, if this quantity is sufficiently large, the condition $V_{v_0}(\boldsymbol{\omega}, \alpha) > 0$ is fulfilled.
In particular, for any number field $K$ and place $v_0$ of $K$, the strong approximation theorem ensures the existence of infinitely many $\alpha \in K$ with $V_{v_0}(\boldsymbol{\omega}, \alpha) > 0$.
\end{remark}

\bigskip

In the special case $K = \mathbb{Q}$, $m = 1$, $\omega_1 = 0$, and $v_0 = \infty$, we obtain the following corollary.

\begin{corollary}\label{corollary main theorem}
Let $r \geq 2$ be an integer, and let $\alpha \in \mathbb{Q} \setminus \{0, -1\}$ with $|\alpha| > 1$. Define
\[
V(\alpha) := r \mathrm{h}_\infty(\alpha) - (r-1)\left( \mathrm{h}(\alpha) + (r+1)\log 2 + r \right).
\]
Assume $V(\alpha) > 0$. Then for any $0 < \varepsilon < V(\alpha)$, there exists an effectively computable constant $H_0 = H_0(\alpha, \varepsilon)$ such that for any $\boldsymbol{b} = (b_j)_{0 \leq j \leq r-1} \in \mathbb{Z}^r \setminus \{\mathbf{0}\}$ with $\mathrm{H}(\boldsymbol{b}) \geq H_0$, we have
\[
\left| \sum_{j=0}^{r-1} b_j \log^j(1+\alpha^{-1}) \right|
> C(\alpha, \varepsilon) \cdot \mathrm{H}(\boldsymbol{b})^{1 - \mu(\alpha, \varepsilon)},
\]
where
\begin{align*}
\mu(\alpha, \varepsilon) &= \frac{r \mathrm{h}_\infty(\alpha)}{V(\alpha) - \varepsilon}, \\
C(\alpha, \varepsilon) &= \frac{1}{2} \exp\left[
- \frac{
(V(\alpha) - \varepsilon + \log 2)
((r-1)(\mathrm{h}(\alpha) + (r+1)\log 2 + r) + \varepsilon)
}{V(\alpha) - \varepsilon}
\right].
\end{align*}
\end{corollary}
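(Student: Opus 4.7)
The plan is to obtain Corollary \ref{corollary main theorem} as a direct specialization of Theorem \ref{power of log indep} to the data $K=\mathbb{Q}$, $m=1$, $\omega_1=0$, $r_1=r$, $v_0=\infty$, and then check that every constant in the corollary matches the corresponding specialization of the quantities $\mathbb{A}_{v_0},\mathbb{B}_{v_0},U_{v_0},V_{v_0},\mu,C$. Since the work is purely algebraic bookkeeping, I do not expect any genuine obstacle; the only thing to be careful about is the way the various normalizations at $v_0=\infty$ collapse.

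First I would simplify $\mathrm{C}(\boldsymbol{\omega})$. Because $m=1$ the index set $\{1\le i_1<i_2\le m\}$ is empty, so the double sum vanishes, and with $r=r_1$ one obtains
\[
\mathrm{C}(\boldsymbol{\omega})=(r+1)\log 2+r.
\]
Next, since $v_0=\infty$ we have $\varepsilon_{v_0}=1$ and $[K_{v_0}:\mathbb{R}]/[K:\mathbb{Q}]=1$, so the last $p$-adic term in $\mathbb{A}_{v_0}$ disappears, and the alternate hypothesis $|\alpha|_{v_0}>|p_{v_0}|_{v_0}^{-p_{v_0}/(p_{v_0}-1)}$ does not apply, leaving only $|\alpha|>1$. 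A direct substitution yields
\[
\mathbb{A}_\infty=(r-1)\mathrm{h}_\infty(\alpha)-\mathrm{C}(\boldsymbol{\omega}),\qquad U_\infty=\mathrm{h}_\infty(\alpha)+\mathrm{C}(\boldsymbol{\omega}),
\]
\[
\mathbb{B}_\infty=(r-1)(\mathrm{h}(\alpha)+\mathrm{C}(\boldsymbol{\omega}))-\mathrm{h}_\infty(\alpha)-\mathrm{C}(\boldsymbol{\omega}),
\]
\[
V_\infty=\mathbb{A}_\infty-\mathbb{B}_\infty=r\,\mathrm{h}_\infty(\alpha)-(r-1)(\mathrm{h}(\alpha)+\mathrm{C}(\boldsymbol{\omega}))=V(\alpha).
\]
From this $\mu=(\mathbb{A}_\infty+U_\infty)/(V_\infty-\varepsilon)=r\,\mathrm{h}_\infty(\alpha)/(V(\alpha)-\varepsilon)$, and $\mathbb{B}_\infty+U_\infty=(r-1)(\mathrm{h}(\alpha)+\mathrm{C}(\boldsymbol{\omega}))$, which matches $C(\alpha,\varepsilon)$ exactly.

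Finally I would observe that for $\boldsymbol{b}\in\mathbb{Z}^r\setminus\{\mathbf{0}\}$ every finite place contributes $1$ to the height, so $\mathrm{H}_\infty(\boldsymbol{b})=\max_j|b_j|=\mathrm{H}(\boldsymbol{b})$; therefore the factor $H_{v_0}(\boldsymbol{\beta})\cdot\mathrm{H}(\boldsymbol{\beta})^{-\mu}$ in Theorem \ref{power of log indep} collapses to $\mathrm{H}(\boldsymbol{b})^{1-\mu}$. The hypotheses $|\alpha|>1$ and $V(\alpha)>0$ are exactly those of the theorem in this setting, and the linear form $\sum_j b_j\log^j(1+\alpha^{-1})$ is the specialization of $\sum_{i,j_i}\beta_{i,j_i}(1+\alpha^{-1})^{\omega_i}\log^{j_i}(1+\alpha^{-1})$ for $m=1$, $\omega_1=0$, $\beta_{1,j}=b_j$. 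Putting these identifications together gives the corollary.
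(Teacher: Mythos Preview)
Your proposal is correct and is exactly the approach the paper takes: the corollary is stated immediately after the sentence ``In the special case $K=\mathbb{Q}$, $m=1$, $\omega_1=0$, and $v_0=\infty$, we obtain the following corollary,'' with no further argument given. Your explicit verification of each specialization of $\mathrm{C}(\boldsymbol{\omega})$, $\mathbb{A}_\infty$, $\mathbb{B}_\infty$, $U_\infty$, $V_\infty$, $\mu$, and $C$ is precisely the bookkeeping the paper leaves to the reader, and all of it is carried out correctly.
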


\bigskip

The following table provides examples of integers $r$ and $\alpha \in \mathbb{Z}$ for which $V(\alpha) > 0$:

\[
\begin{array}{|c|c|c|c|c|c|c|c|c|}
\hline
r & 3 & 4 & 5 & 6 & 7 & 8 & 9 & 10 \\
\hline
|\alpha| \geq & 103278 & e^{22.3973} & e^{27.2248} & e^{40.5361} & e^{56.4495} & e^{74.9649} & e^{96.0824} & e^{119.8020} \\
\hline
\end{array}
\]

For $r = 5$, we have
\[
V(\alpha) = \log|\alpha| - 24 \log 2 - 20.
\]
We also compute $\mu(\alpha, \varepsilon)$ and $C(\alpha, \varepsilon)$ with $\varepsilon = 0.1$ for the following specific values of $\alpha$:
\[
\begin{array}{|c|c|c|c|c|c|c|c|}
\hline
\alpha & \pm 10^{16} & \pm 10^{17} & \pm 10^{18} & \pm 10^{19} & \pm 10^{20} & \pm 10^{21} & \pm 10^{22} \\
\hline
\mu(\alpha, \varepsilon) & 1740.6055 & 81.2650 & 43.9892 & 31.1889 & 24.7161 & 20.8088 & 18.1940 \\
\hline
C(\alpha, \varepsilon) & e^{-1368.98}/2 & e^{-245.38}/2 & e^{-229.14}/2 & e^{-229.62}/2 & e^{-234.41}/2 & e^{-240.95}/2 & e^{-248.38}/2 \\
\hline
\end{array}
\]

\
 
The present article is organized as follows.
In Subsection $3.1$, 
we review fundamental properties of Pad\'{e} approximation and various elementary facts essential for proving Theorem $\ref{power of log indep}$.
Given that our construction of Pad\'{e} approximation for the product of binomial functions and power of logarithm functions relies on that of exponential functions, Subsection $3.2$ briefly revisits the construction and basic properties of Pad\'{e} approximation for exponential functions.
Using the former subsections, the Pad\'{e} approximations needed in the proof are constructed in Subsection $3.3$.    
Section $4$ is devoted to the analytic estimates pertaining to Pad\'{e} approximation for the  product of binomial functions and power of logarithm functions.
In Section $5$, we then proceed to prove the linear independence criterion (see Lemma $\ref{criterion}$) and deduce our main result.
In Appendix, we delve into the perfectness of Pad\'{e} approximation of Laurent series that is a power series with variable $1/z$ instead of power series. The main purpose of this appendix is to prove the perfectness of Pad\'{e} approximation of polylogarithm function.

\section{Pad\'{e} Approximation} 
In this section, we recall the definition and basic properties of Pad\'{e} approximations of formal power series. Throughout, we fix a field $K$ of characteristic $0$.
We define the order function at $z=0$ by
$${\rm ord}: K((z)) \longrightarrow \mathbb{Z} \cup \{\infty\};\quad \sum_k a_k z^k \mapsto \min\{k \in \mathbb{Z} \cup \{\infty\} \mid a_k \ne 0\}\enspace.$$
Note that for $f \in K((z))$, we have ${\rm ord} \, f = \infty$ if and only if $f = 0$.

\subsection{Pad\'{e} approximations of formal power series}

\begin{lemma} \label{Pade}
Let $m$ be an integer with $m \ge 2$, and let $\boldsymbol{f} = (f_1(z), \ldots, f_m(z)) \in K[[z]]^m$. For $\boldsymbol{n} = (n_1, \ldots, n_m) \in \mathbb{Z}_{\ge 0}^m$, there exists a vector of polynomials $(P_1(z), \ldots, P_m(z)) \in K[z]^m$ satisfying:
\begin{align*}
&(i) \quad (P_1(z), \ldots, P_m(z)) \ne (0, \ldots, 0)\enspace,\\
&(ii) \quad \deg P_j(z) \le n_j \quad \text{for } 1 \le j \le m\enspace,\\
&(iii) \quad {\rm ord} \left( \sum_{j=1}^{m} P_j(z) f_j(z) \right) \ge \sum_{j=1}^m (n_j + 1) - 1\enspace.
\end{align*}
\end{lemma}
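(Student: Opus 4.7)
The plan is the classical Hermite--Padé linear algebra argument: view the coefficients of the unknown polynomials $P_j(z)$ as a long vector of scalars in $K$, translate the order condition $(iii)$ into a homogeneous linear system over $K$, and observe that the system has strictly more unknowns than equations, so a nontrivial solution exists automatically.

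More concretely, I would parametrize each $P_j(z) = \sum_{k=0}^{n_j} p_{j,k} z^k$, so that condition $(ii)$ is built in and the total number of scalar unknowns $p_{j,k}$ is $\sum_{j=1}^{m} (n_j + 1)$. Writing $f_j(z) = \sum_{\ell \ge 0} a_{j,\ell} z^\ell$, the coefficient of $z^N$ in $\sum_j P_j(z) f_j(z)$ is the $K$-linear form
\[
[z^N]\left(\sum_{j=1}^m P_j(z) f_j(z)\right) = \sum_{j=1}^{m} \sum_{\substack{0 \le k \le n_j \\ 0 \le \ell,\, k+\ell = N}} p_{j,k}\, a_{j,\ell}
\]
in the unknowns $p_{j,k}$. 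Condition $(iii)$ amounts to requiring that these linear forms vanish for $N = 0, 1, \ldots, \sum_{j=1}^m (n_j+1) - 2$, i.e.\ $\sum_{j=1}^m (n_j+1) - 1$ homogeneous linear equations.

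Since the number of unknowns $\sum_{j=1}^m (n_j+1)$ strictly exceeds the number of equations $\sum_{j=1}^m (n_j+1) - 1$ (by exactly one), the kernel of this system is a $K$-subspace of dimension at least one. Choosing any nonzero vector of coefficients $(p_{j,k})$ in this kernel yields polynomials $(P_1, \ldots, P_m)$ satisfying $(i)$, $(ii)$, $(iii)$ simultaneously.

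There is essentially no obstacle here: the only thing to verify carefully is the bookkeeping between the number of coefficients ($\sum n_j + m$) and the number of vanishing-order conditions ($\sum n_j + m - 1$), which leaves exactly one degree of freedom. I would keep the proof short and simply note that this is the standard dimension-count argument behind the existence of Hermite--Padé forms, which will be the starting point for the more delicate explicit constructions developed in Subsections~3.2 and~3.3.
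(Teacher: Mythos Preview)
Your proposal is correct and matches the paper's own approach: the paper does not spell out a proof of Lemma~\ref{Pade} but immediately afterwards (Remark~\ref{remark bij}) encodes exactly this dimension count via the generalized Hankel matrix $H_{N-2,\boldsymbol{n}}(\boldsymbol{f})\in{\rm M}_{N-1,N}(K)$, whose kernel is nontrivial for the same reason you give. Your write-up is the standard argument and is fully adequate here.
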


\begin{definition}
A tuple of polynomials $(P_1(z), \ldots, P_m(z)) \in K[z]^m$ satisfying conditions $(i)$, $(ii)$, and $(iii)$ is called a Pad\'{e} system of approximants of $\boldsymbol{f}$ of weight $\boldsymbol{n}$.

The power series $\sum_{j=1}^m P_j(z) f_j(z)$ is referred to as a Pad\'{e} approximation of $\boldsymbol{f}$ of weight $\boldsymbol{n}$ (also called the \emph{remainder}).
\end{definition}

\begin{definition} \label{Pade 0}
Let $m \ge 2$ and $\boldsymbol{f} = (f_1(z), \ldots, f_m(z)) \in K[[z]]^m$.

\begin{itemize}
\item[$(i)$] For $\boldsymbol{n} = (n_1, \ldots, n_m) \in \mathbb{Z}_{\ge 0}^m$, we say that $\boldsymbol{n}$ is \emph{normal} with respect to $\boldsymbol{f}$ if every Pad\'{e} approximation $R(z)$ of weight $\boldsymbol{n}$ satisfies
$${\rm ord} \, R(z) = \sum_{j=1}^m (n_j + 1) - 1\enspace.$$

\item[$(ii)$] We say that $\boldsymbol{f}$ is \emph{perfect} if every index $\boldsymbol{n} \in \mathbb{Z}_{\ge 0}^m$ is normal with respect to $\boldsymbol{f}$.
\end{itemize}
\end{definition}

\begin{remark} \label{remark bij} 
Let $\boldsymbol{n} = (n_1, \ldots, n_m) \in \mathbb{Z}_{\ge 0}^m$ and $\boldsymbol{f} = (f_1(z), \ldots, f_m(z)) \in K[[z]]^m$. Let $N = \sum_{j=1}^m (n_j + 1)$ and write $f_j(z) = \sum_{k=0}^\infty f_{j,k} z^k$ for $1 \le j \le m$. 

For a non-negative integer $r$, define the generalized Hankel matrix of $\boldsymbol{f}$ as
\[
H_{r, \boldsymbol{n}}(\boldsymbol{f}) = \begin{pmatrix}
f_{1,0} & 0 & \dots & 0 & \dots & f_{m,0} & 0 & \dots & 0 \\
f_{1,1} & f_{1,0} & \dots & 0 & \dots & f_{m,1} & f_{m,0} & \dots & 0 \\
\vdots & \vdots & \ddots & \vdots & \ddots & \vdots & \vdots & \ddots & \vdots \\
f_{1,r} & f_{1,r-1} & \dots & f_{1,r-n_1} & \dots & f_{m,r} & f_{m,r-1} & \dots & f_{m,r-n_m}
\end{pmatrix} \in {\rm M}_{r+1,N}(K)\enspace,
\]
where $f_{j,k} = 0$ for $k < 0$. Then the following map is a bijection:
\begin{align}
\phi^{(\boldsymbol{n})}_{\boldsymbol{f}}: &\ \ker(H_{N-2,\boldsymbol{n}}(\boldsymbol{f})) \setminus \{\bold{0}\} \rightarrow \left\{(P_j(z)) \in K[z]^m \,\middle|\, \sum_{j=1}^m P_j(z) f_j(z) \text{ is a Pad\'{e} approximation of weight } \boldsymbol{n} \right\} \label{bijection pade} \\
&{}^{t}(p_{1,0}, \ldots, p_{1,n_1}, \ldots, p_{m,0}, \ldots, p_{m,n_m}) \mapsto \left(P_j(z) = \sum_{k=0}^{n_j} p_{j,k} z^k \right)_{1 \le j \le m}\enspace. \nonumber
\end{align}

Note that $\boldsymbol{n}$ is normal with respect to $\boldsymbol{f}$ if and only if $H_{N-1,\boldsymbol{n}}(\boldsymbol{f}) \in {\rm GL}_N(K)$.
\end{remark}

\begin{lemma} \label{cor fund Pade}
Let $m \ge 2$, $\boldsymbol{f} = (f_1(z), \ldots, f_m(z)) \in K[[z]]^m$, and $\boldsymbol{n} = (n_1, \ldots, n_m) \in \mathbb{N}^m$. 
Fix $k$ with $1 \le k \le m$, and define $\tilde{\boldsymbol{n}}_k = (n_1, \ldots, n_{k-1}, n_k + 1, n_{k+1}, \ldots, n_m) \in \mathbb{N}^m$. 
Let $(P_1(z), \ldots, P_m(z)) \in K[z]^m$ be a Pad\'{e} approximant of $\boldsymbol{f}$ of weight $\tilde{\boldsymbol{n}}_k$. Suppose that $\boldsymbol{n}$ is normal with respect to $\boldsymbol{f}$. Then:
$${\rm deg} \, P_k(z) = n_k + 1\enspace.$$
\end{lemma}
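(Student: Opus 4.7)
The plan is to argue by contradiction: assume $\deg P_k \le n_k$ and show that this would force $\boldsymbol{n}$ to be non-normal, contradicting the hypothesis.

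More precisely, I will first observe that the definition of a Padé approximant of weight $\tilde{\boldsymbol{n}}_k$ guarantees that $(P_1(z),\ldots,P_m(z))$ is nonzero, satisfies $\deg P_j \le n_j$ for $j \ne k$ and $\deg P_k \le n_k + 1$, and gives a remainder of order at least
\[
\sum_{j=1}^{m}(\tilde{n}_{k,j}+1) - 1 \;=\; \sum_{j=1}^{m}(n_j+1).
\]
Now suppose, aiming at a contradiction, that $\deg P_k \le n_k$. Then the same tuple $(P_1(z),\ldots,P_m(z))$ satisfies all three conditions $(i),(ii),(iii)$ of Lemma \ref{Pade} relative to the weight $\boldsymbol{n}$, so it is a (nonzero) Padé system of approximants of $\boldsymbol{f}$ of weight $\boldsymbol{n}$. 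Its remainder $R(z) = \sum_{j=1}^{m} P_j(z)f_j(z)$ then satisfies
\[
\mathrm{ord}\,R(z) \;\ge\; \sum_{j=1}^{m}(n_j+1) \;>\; \sum_{j=1}^{m}(n_j+1) - 1,
\]
which contradicts the assumed normality of $\boldsymbol{n}$ (Definition \ref{Pade 0}(i)), since normality forces equality in the order inequality for \emph{every} Padé approximation of weight $\boldsymbol{n}$.

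Hence $\deg P_k \ge n_k + 1$, and combined with the upper bound $\deg P_k \le n_k + 1$ coming from the weight $\tilde{\boldsymbol{n}}_k$, we conclude $\deg P_k = n_k + 1$. There is no real obstacle in this argument; the only subtlety is to notice that if $\deg P_k$ dropped, then the very same tuple would automatically qualify as an approximant of the smaller weight $\boldsymbol{n}$ while producing a remainder of order strictly greater than the generic value, which normality forbids. Alternatively, one could phrase the same argument matrix-theoretically via Remark \ref{remark bij}: the extra degree of freedom in $\tilde{\boldsymbol{n}}_k$ and the invertibility of $H_{N-1,\boldsymbol{n}}(\boldsymbol{f})$ force the leading coefficient of $P_k$ in degree $n_k+1$ to be nonzero, but the direct argument above is cleaner.
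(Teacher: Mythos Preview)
Your proof is correct and follows essentially the same contradiction strategy as the paper: assume $\deg P_k \le n_k$, observe that the tuple then qualifies as a weight-$\boldsymbol{n}$ approximant whose remainder has order $\ge \sum_j(n_j+1)$, and derive a contradiction with normality. The only cosmetic difference is that the paper phrases the contradiction via the Hankel-matrix characterization of normality from Remark~\ref{remark bij} (the dropped leading coefficient yields a nonzero vector in $\ker H_{N-1,\boldsymbol{n}}(\boldsymbol{f})$), whereas you invoke Definition~\ref{Pade 0}(i) directly; you yourself note this equivalence at the end.
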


\begin{proof}
Let $N = \sum_{j=1}^m (n_j + 1)$. Since $\boldsymbol{n}$ is normal with respect to $\boldsymbol{f}$, we have
$${\rm dim}_K \, \ker(H_{N-1, \boldsymbol{n}}(\boldsymbol{f})) = 0\enspace.$$
Suppose there exists $1 \le k \le m$ and a Pad\'{e} approximant $(P_1(z), \ldots, P_m(z))$ of weight $\tilde{\boldsymbol{n}}_k$ such that ${\rm deg} \, P_k < n_k + 1$.
Let
\[
(\phi^{(\tilde{\boldsymbol{n}}_k)}_{\boldsymbol{f}})^{-1}(P_1(z), \ldots, P_m(z)) = {}^{t}(p_{1,0}, \ldots, p_{1,n_1}, \ldots, p_{k,0}, \ldots, p_{k,n_k}, 0, \ldots, p_{m,0}, \ldots, p_{m,n_m})\enspace,
\]
where $\phi^{(\tilde{\boldsymbol{n}}_k)}_{\boldsymbol{f}}$ is as defined in \eqref{bijection pade}.
Then
\[
{}^{t}(p_{1,0}, \ldots, p_{1,n_1}, \ldots, p_{k,0}, \ldots, p_{k,n_k}, \ldots, p_{m,0}, \ldots, p_{m,n_m}) \in \ker(H_{N-1, \boldsymbol{n}}(\boldsymbol{f})) \setminus \{\bold{0}\}\enspace,
\]
contradicting the invertibility of $H_{N-1, \boldsymbol{n}}(\boldsymbol{f})$. This completes the proof.
\end{proof}

\subsection{Pad\'{e} approximations of exponential functions}
In this section, we recall some properties of Pad\'{e} approximations of exponential functions. 
\begin{proposition} \label{perfect e} $(${\it confer}  \rm{\cite[Theorem $1.2.1$]{J}}$)$
Let $n$ be a non-negative integer and $\omega_0,\ldots,\omega_n$ pairwise distinct elements of $K$. 
Then the formal power series $(e^{\omega_0z},\ldots,e^{\omega_n z})$ is perfect. 
\end{proposition}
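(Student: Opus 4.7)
The plan is to verify the defining condition of perfectness directly: fix an arbitrary weight $\boldsymbol{d} = (d_0, \ldots, d_n) \in \mathbb{Z}_{\ge 0}^{n+1}$, set $N := \sum_{j=0}^n (d_j+1)$, and show that any Pad\'e approximation $R(z) = \sum_{j=0}^{n} P_j(z)\, e^{\omega_j z}$ of weight $\boldsymbol{d}$ with ${\rm ord}\, R(z) \ge N$ must satisfy $P_j = 0$ for all $j$. Combined with condition $(i)$ of Lemma \ref{Pade}, this forces ${\rm ord}\, R = N-1$ for every Pad\'e approximation of weight $\boldsymbol{d}$, which is precisely normality.

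The key tool will be the differential operator $D = d/dz$ on $K[[z]]$. For each $k \in \{0,\ldots,n\}$, I will introduce
\[
L_k := \prod_{\substack{j=0 \\ j \ne k}}^{n} (D - \omega_j)^{d_j+1},
\]
a differential operator of order $N - d_k - 1$. Since $(D - \omega_j)^{d_j+1}$ annihilates any $Q(z)\, e^{\omega_j z}$ with $\deg Q \le d_j$, applying $L_k$ to $R(z)$ kills every summand except the $k$-th, producing $L_k R(z) = \widetilde{P}_k(z)\, e^{\omega_k z}$ for some polynomial $\widetilde{P}_k$. The crucial computation is
\[
(D - \omega_j)\bigl(Q(z)\, e^{\omega_k z}\bigr) = \bigl(Q'(z) + (\omega_k - \omega_j)\, Q(z)\bigr) e^{\omega_k z};
\]
because $\omega_j \ne \omega_k$, this preserves the degree of $Q$ and scales its leading coefficient by $\omega_k - \omega_j$. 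Iterating across the $d_j+1$ factors for each $j \ne k$, one finds that $\widetilde{P}_k$ has the same degree as $P_k$ and its leading coefficient equals that of $P_k$ multiplied by the nonzero scalar $\prod_{j \ne k}(\omega_k - \omega_j)^{d_j+1}$.

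To finish, I will invoke the elementary fact that each application of $D$ lowers ${\rm ord}$ by at most one, so ${\rm ord}(L_k R) \ge N - (N - d_k - 1) = d_k + 1$. Since $e^{\omega_k z}$ is a unit in $K[[z]]$, this gives ${\rm ord}\, \widetilde{P}_k \ge d_k + 1$, incompatible with $\deg \widetilde{P}_k \le d_k$ unless $\widetilde{P}_k = 0$; by the leading-coefficient identity above, $\widetilde{P}_k = 0$ then forces $P_k = 0$. Running this for every $k$ yields the full vanishing. The only step requiring genuine care is the degree-preservation claim for $(D - \omega_j)$ acting on $Q(z)\, e^{\omega_k z}$: this is the sole place where the pairwise-distinctness hypothesis on the $\omega_j$ is used, and it needs a clean induction on the exponent $d_j+1$ to rule out any accidental cancellation of the leading coefficient. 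Everything else is bookkeeping between orders of differential operators and orders of formal power series at the origin.
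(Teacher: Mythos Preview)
Your argument is correct. The paper itself does not supply a proof of this proposition; it simply records the statement and cites Jager \cite[Theorem~1.2.1]{J}. So there is no in-paper proof to compare against, and your write-up would serve as a self-contained justification.

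A few minor points of phrasing to tighten before inclusion. First, where you write ``each application of $D$ lowers ${\rm ord}$ by at most one,'' you really mean each application of a first-order factor $(D-\omega_j)$; this is immediate since $(D-\omega_j)f$ is a $K$-linear combination of $Df$ and $f$, but it is worth saying explicitly. Second, your ``induction on the exponent $d_j+1$'' for the degree-preservation claim is not really needed as a separate step: once you have verified that a single application of $(D-\omega_j)$ to $Q(z)e^{\omega_k z}$ returns $(Q'+(\omega_k-\omega_j)Q)e^{\omega_k z}$ with the same polynomial degree and leading coefficient scaled by $\omega_k-\omega_j\ne 0$, iteration is automatic and no cancellation can occur. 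Third, the logical wrap-up is exactly right but could be stated in one line: you have shown that $\deg P_j\le d_j$ for all $j$ together with ${\rm ord}\,R\ge N$ forces $P_j=0$ for all $j$; since a Pad\'e system is by definition nonzero, every Pad\'e approximation of weight $\boldsymbol{d}$ therefore has ${\rm ord}\,R\le N-1$, and condition~$(iii)$ of Lemma~\ref{Pade} gives the reverse inequality.

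This annihilating-operator method is classical and is essentially the argument one finds in the literature for perfectness of exponential systems; it is the natural choice here because the paper already treats everything formally over a characteristic-zero field, so no analytic input is required.
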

Let $\boldsymbol{\omega}=(\omega_0,\ldots,\omega_n)\in K^{n+1}$, where entries are pairwise distinct. 
The explicit Pad\'{e} approximations of $(e^{\omega_0 z},\ldots,e^{\omega_n z})$ are given by K.~Mahler as follows.

\begin{proposition} \label{Pade e} $(${\it confer}  \rm{\cite[p. $242$]{J}}$)$
Let $\boldsymbol{r}=(r_0,\ldots,r_n)\in \Z^{n+1}_{\ge0}$ and $\{p_{h,j}(\boldsymbol{r}, \boldsymbol{\omega})\}_{0\le h \le n, 1\le j \le r_h+1}$ be the subset of $K$ satisfying the following equality:
$$
\dfrac{1}{\prod_{i=0}^n(x-\omega_{i})^{r_{i}+1}}=\sum_{h=0}^n \sum_{j=1}^{r_h+1}\dfrac{p_{h,j}(\boldsymbol{r},\boldsymbol{\omega})}{(x-\omega_h)^j}\enspace.
$$
Then the formal power series $$\mathfrak{R}(z)=\sum_{h=0}^n\left(\sum_{j=0}^{r_h}p_{h,j+1}(\boldsymbol{r},\boldsymbol{\omega})\dfrac{z^j}{j!}\right)e^{\omega_hz}\in K[[z]]\enspace,$$
is a weight $\boldsymbol{r}$ Pad\'{e} approximation of $(e^{\omega_0z},\ldots,e^{\omega_nz})$.
\end{proposition}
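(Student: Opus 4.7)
The plan is to verify directly the three defining conditions (i), (ii), (iii) of Lemma~\ref{Pade} for the tuple
\[
P_h(z) := \sum_{j=0}^{r_h} p_{h,j+1}(\boldsymbol{r},\boldsymbol{\omega})\,\frac{z^j}{j!}, \qquad 0\le h\le n,
\]
so that $\mathfrak{R}(z) = \sum_{h=0}^n P_h(z)\,e^{\omega_h z}$. The degree bound (ii) is immediate. For (i), multiplying the hypothesized partial fraction identity by $(x-\omega_h)^{r_h+1}$ and letting $x\to\omega_h$ identifies $p_{h,r_h+1}(\boldsymbol{r},\boldsymbol{\omega}) = \prod_{i\ne h}(\omega_h-\omega_i)^{-(r_i+1)}\ne 0$, since the $\omega_i$ are pairwise distinct; hence the leading coefficient of $P_h$ is nonzero and in particular $P_h\ne 0$. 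The heart of the argument is condition (iii), namely
\[
\mathrm{ord}\,\mathfrak{R}(z) \;\ge\; \sum_{h=0}^n (r_h+1)-1 \;=\; n+\sum_{h=0}^n r_h.
\]

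My plan for (iii) is to compare two Laurent-series computations linked by the given partial fraction decomposition. First, expand each pole $(x-\omega_h)^{-j}$ at $x=\infty$ by the generalized binomial series
\[
\frac{1}{(x-\omega_h)^j}\;=\;\sum_{m\ge 0}\binom{j+m-1}{m}\,\omega_h^{m}\,x^{-j-m},
\]
and read off the coefficient $a_k$ of $x^{-k-1}$ in the right-hand side of the partial fraction identity: one finds
\[
a_k \;=\; \sum_{h=0}^n\sum_{j=1}^{r_h+1} p_{h,j}(\boldsymbol{r},\boldsymbol{\omega})\,\binom{k}{j-1}\,\omega_h^{k+1-j},
\]
with the convention $\binom{k}{j-1}=0$ for $j-1>k$. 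Because the left-hand side $\prod_{i=0}^n (x-\omega_i)^{-(r_i+1)}$ has a zero of order $n+1+\sum r_i$ at infinity, $a_k=0$ whenever $k\le n+\sum r_i-1$. Second, convolve $P_h(z)$ directly with the Taylor series of $e^{\omega_h z}$ and sum over $h$; after the reindexing $j\mapsto j-1$ the outcome is exactly $k!\,[z^k]\mathfrak{R}(z)=a_k$. Combining the two computations forces $[z^k]\mathfrak{R}(z)=0$ for $k\le n+\sum r_i-1$, which is the required order bound.

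The only delicate point is the combinatorial bookkeeping that matches the two expressions for $a_k$; no analytic input is needed, in keeping with the paper's announced formal methodology. Conceptually, the partial fraction decomposition is doing double duty: the pole of prescribed order at $x=\omega_h$ encodes both the factor $e^{\omega_h z}$ and the polynomial basis $1,z,\dots,z^{r_h}$, while the decay of the rational function at $x=\infty$ encodes the cancellation of the low-order $z$-coefficients. If an analytic packaging is preferred, the same computation is captured by the residue identity
\[
\mathfrak{R}(z) \;=\; \frac{1}{2\pi i}\oint_{|x|=R}\frac{e^{xz}}{\prod_{i=0}^n(x-\omega_i)^{r_i+1}}\,dx,\qquad R>\max_i|\omega_i|,
\]
obtained by applying the residue theorem at $x=\omega_0,\dots,\omega_n$ via the given partial fractions; expanding $e^{xz}$ in $z$ then reduces the order condition to the vanishing of $\oint x^k/\prod(x-\omega_i)^{r_i+1}\,dx$ for $k\le n+\sum r_i-1$, which follows from the decay of the integrand at infinity. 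I would write out the formal version, as it stays within the framework of Laurent expansions already in force in the paper, and mention the residue viewpoint only in passing.
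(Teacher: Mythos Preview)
Your argument is correct: the degree bound is trivial, the nonvanishing of the leading coefficient $p_{h,r_h+1}=\prod_{i\ne h}(\omega_h-\omega_i)^{-(r_i+1)}$ follows exactly as you say, and the core identity $k!\,[z^k]\mathfrak{R}(z)=a_k$ is obtained by the two matching computations you describe, with the convention $0^0=1$ when some $\omega_h=0$.

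There is nothing to compare against here, because the paper does not prove Proposition~\ref{Pade e}; it simply quotes the result from Jager \cite[p.~242]{J}. Your formal Laurent-expansion argument is in fact the standard one, and the integral formula you mention in passing,
\[
\mathfrak{R}(z)=\frac{1}{2\pi i}\oint_{|x|=R}\frac{e^{xz}}{\prod_{i=0}^n(x-\omega_i)^{r_i+1}}\,dx,
\]
is precisely how Jager packages it. Both versions lead to the same combinatorics, and your choice to foreground the formal one is consistent with the paper's stated methodology.
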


\subsection{Pad\'{e} approximations of product of binomial functions and power of logarithm function} 
In this section, we formulate Pad\'{e} approximations for the product of binomial functions and the power of the logarithm function. 
M.~Huttner explored this type of approximation within the context of the Riemann monodromy problem ({\it see} \cite[$3.2)$, p.188]{Huttner}).
Here, we obtain these approximants by adapting Pad\'{e} approximations of exponential functions through the $K$-homomorphism:
$$\mathcal{T}: K[[z]]\longrightarrow K[[z]]; \ f(z)\mapsto f(\log(1+z))\enspace.$$

Note that $\mathcal{T}$ is a $K$-isomorphism with $\mathcal{T}^{-1}:K[[z]]\longrightarrow K[[z]]; \ g(z)\mapsto g(e^z-1)$ and $\mathcal{T}$ is an order preserving map, i.e. 
\begin{align} \label{preserve}
{\rm{ord}}\, f(z)={\rm{ord}}\, \mathcal{T}(f(z)) \ \ \text{for} \ \ f(z)\in K[[z]]\enspace.
\end{align}
We now fix $\omega_1,\ldots,\omega_m\in K$ with $\omega_i-\omega_j\notin \Z$ for $1\le i<j\le m$, along with positive integers $r_1,\ldots,r_m$.
Let us consider the following indices.

\
 
Let $1\le l_i \le r_i$, $(s_{i,1},\ldots,s_{i,l_i})\in \N^{l_i}$ with $s_{i,1}+\cdots+s_{i,l_i}=r_i$ for $1\le i \le m$. 
For non-negative integers $n_{i,1},\ldots,n_{i,l_i}$ with $n_{i,1}>\cdots>n_{i,l_i}$,
\begin{align*}
&\boldsymbol{n}(i)=(\underbrace{n_{i,1},\ldots,n_{i,1}}_{s_{i,1}},\ldots, \underbrace{n_{i,l_i},\ldots,n_{i,l_i}}_{s_{i,l_i}})\in \N^{r_i}\enspace,\\
&\boldsymbol{r}(i)=(\underbrace{r_i-1,\ldots,r_i-1}_{n_{i,l_i}+1},\underbrace{r_i-s_{i,l_i}-1,\ldots,r_i-s_{i,l_i}-1}_{n_{i,l_i-1}-n_{i,l_i}},\ldots,\underbrace{s_{i,1}-1,\ldots,s_{i,1}-1}_{n_{i,1}-n_{i,2}})\in \Z^{n_{i,1}+1}\enspace,
\end{align*}
and $\boldsymbol{n}=(\boldsymbol{n}(1),\ldots,\boldsymbol{n}(m))\in \N^{r_1+\cdots+r_m},\ \ \boldsymbol{r}=(\boldsymbol{r}(1),\ldots,\boldsymbol{r}(m))\in\Z^{n_{1,1}+\cdots+n_{m,1}+m}\enspace.$

\

Denote the set of weight $\boldsymbol{r}$ Pad\'{e} approximation of $(e^{(\omega_i+k_i)z})_{{1\le i \le m, 0\le k_i \le n_{i,1}}}$ by $V_{\boldsymbol{r}}$ and the set of weight $\boldsymbol{n}$ Pad\'{e} approximation of $((1+z)^{\omega_i}{\rm{log}}^{j_i}(1+z))_{{1\le i \le m, 0\le j_i \le r_i-1}}$ by $W_{\boldsymbol{n}}$.

\

We are now prepared to study weight-$\boldsymbol{n}$ Pad\'{e} approximants for the product of binomial functions and powers of the logarithmic function. The key ingredient is the following proposition.

\begin{proposition} \label{diagonal normality log}
We retain the notation as previously established. The $K$-isomorphism $\mathcal{T}$ defines a bijection:
\[\mathcal{T}: V_{\boldsymbol{r}} \longrightarrow W_{\boldsymbol{n}}.\]
In particular, the index $\boldsymbol{n}$ is normal with respect to the system \[((1+z)^{\omega_i}\log^{j_i}(1+z))_{1\le i \le m,\, 0\le j_i \le r_i-1}.\]
\end{proposition}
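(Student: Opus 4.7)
The plan is to transport Pad\'e data through the $K$-isomorphism $\mathcal{T}$ and verify the degree, order, and non-triviality conditions on both sides. First I will take $\mathfrak{R}\in V_{\boldsymbol{r}}$, written as $\mathfrak{R}(z)=\sum_{i,k_i}Q_{i,k_i}(z)\,e^{(\omega_i+k_i)z}$ with $\deg Q_{i,k_i}\le r(i,k_i)$, where $r(i,k_i)$ denotes the $k_i$-th entry of $\boldsymbol{r}(i)$. Applying $\mathcal{T}$ and using $\mathcal{T}(e^{\lambda z})=(1+z)^{\lambda}$, I expand $Q_{i,k_i}(\log(1+z))=\sum_{j_i}q_{i,k_i,j_i}\log^{j_i}(1+z)$ and collect terms to obtain
\[
\mathcal{T}(\mathfrak{R})(z)=\sum_{i=1}^m\sum_{j_i=0}^{r_i-1}P_{i,j_i}(z)\,(1+z)^{\omega_i}\log^{j_i}(1+z),\qquad P_{i,j_i}(z)=\sum_{k_i}q_{i,k_i,j_i}(1+z)^{k_i},
\]
which is a polynomial in $z$. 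The core technical check is $\deg P_{i,j_i}\le n(i,j_i)$; since $q_{i,k_i,j_i}=0$ whenever $j_i>r(i,k_i)$, this reduces to the combinatorial statement that $\boldsymbol{r}(i)+\boldsymbol{1}$ and $\boldsymbol{n}(i)+\boldsymbol{1}$ are conjugate partitions: for $k_i\in[n_{i,t+1}+1,n_{i,t}]$ (with $n_{i,l_i+1}:=-1$) the value $r(i,k_i)$ equals $s_{i,1}+\cdots+s_{i,t}-1$, which matches the largest $j_i$ for which $n(i,j_i)\ge k_i$.

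For the reverse inclusion I will start with $R\in W_{\boldsymbol{n}}$ and re-expand each polynomial $P_{i,j_i}(z)$ in the basis $\{(1+z)^{k_i}\}_{0\le k_i\le n(i,j_i)}$; then applying $\mathcal{T}^{-1}$ and using $\mathcal{T}^{-1}(\log(1+z))=z$ together with $\mathcal{T}^{-1}((1+z)^{\lambda})=e^{\lambda z}$, I regroup to obtain $\mathcal{T}^{-1}(R)=\sum_{i,k_i}Q_{i,k_i}(z)\,e^{(\omega_i+k_i)z}$, and the same conjugate-partition bookkeeping gives $\deg Q_{i,k_i}\le r(i,k_i)$. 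The order condition is transferred using \eqref{preserve}, and since conjugate partitions have equal size, one has $\sum_{i,k_i}(r(i,k_i)+1)=\sum_{i,j_i}(n(i,j_i)+1)$, so the Pad\'e-order thresholds at the two weights coincide. Non-triviality is immediate because $\mathcal{T}^{\pm1}$ are $K$-linear isomorphisms on $K[[z]]$ and hence carry non-zero series to non-zero series.

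Normality of $\boldsymbol{n}$ then follows at once from Proposition \ref{perfect e}: that result asserts perfectness of $(e^{(\omega_i+k_i)z})$, so every element of $V_{\boldsymbol{r}}$ has order exactly $N-1$, where $N$ denotes the common weight total; by the bijection and order preservation, every element of $W_{\boldsymbol{n}}$ has exactly the same order, which is the normality condition at weight $\boldsymbol{n}$.

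The main obstacle will be precisely the combinatorial verification that $\boldsymbol{n}(i)+\boldsymbol{1}$ and $\boldsymbol{r}(i)+\boldsymbol{1}$ are conjugate partitions, so that the degree bounds $\deg P_{i,j_i}\le n(i,j_i)$ and $\deg Q_{i,k_i}\le r(i,k_i)$ become exact duals of each other and the order thresholds automatically match; all other pieces (order preservation, linearity, invertibility of $\mathcal{T}$, and invocation of Proposition \ref{perfect e}) are then essentially formal.
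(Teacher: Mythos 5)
Your proposal is correct and follows essentially the same route as the paper: transport Padé data through $\mathcal{T}$ using $\mathcal{T}(e^{\lambda z})=(1+z)^{\lambda}$, regroup the double sum so that the degree bounds for $\boldsymbol{r}$ and $\boldsymbol{n}$ correspond (the paper carries out explicitly the swap of summation order that your conjugate-partition observation encodes), transfer the order via \eqref{preserve}, and deduce normality of $\boldsymbol{n}$ from the perfectness of the exponential system (Proposition \ref{perfect e}). The conjugate-partition framing is a clean way to state the bookkeeping the paper does by hand, but it is the same argument.
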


\begin{proof}
Let $\mathfrak{R}(z) = \sum_{i=1}^m \sum_{k_i=0}^{n_{i,1}} \mathfrak{P}_{i,k_i}(z) e^{(\omega_i+k_i)z} \in V_{\boldsymbol{r}}$.
By Proposition~\ref{perfect e}, we have
\[\mathrm{ord}\, \mathfrak{R}(z) = \sum_{i=1}^m \sum_{v_i=1}^{l_i} (n_{i,v_i}+1) s_{i,v_i}.\]
Applying equation~(\ref{preserve}), we obtain
\[\mathrm{ord}\, \mathcal{T}(\mathfrak{R}(z)) = \sum_{i=1}^m \sum_{v_i=1}^{l_i} (n_{i,v_i}+1) s_{i,v_i}.\]
We claim that there exists a vector of polynomials $(P_{i,j_i}(z))_{1\le i \le m,\, 0 \le j_i \le r_i-1} \in K[z]^{r_1+\cdots+r_m}$ satisfying:
\begin{align*}
\mathcal{T}(\mathfrak{R}(z)) &= \sum_{i=1}^m \sum_{j_i=0}^{r_i-1} P_{i,j_i}(z) (1+z)^{\omega_i} \log^{j_i}(1+z),\\
\deg P_{i,j_i}(z) &\le n_{i,j_i} \quad \text{for } 1 \le i \le m,\, 0 \le j_i \le r_i - 1.
\end{align*}
This implies $\mathcal{T}(\mathfrak{R}(z)) \in W_{\boldsymbol{n}}$. Since $\mathcal{T}$ is linear, we may assume $i = 1$ and define $\omega_1 = \omega$, $r_1 = r$, $l_1 = l$, $s_{1,j} = s_j$, $n_{1,j} = n_j$ (for $1 \le j \le l$), and $\mathfrak{P}_{1,k}(z) = \mathfrak{P}_k(z) = \sum_{j=0}^{r-1} p_{j,k} z^j$.
Then we have:
{\small{\begin{align*}
\sum_{k=0}^{n_{1}}\mathfrak{P}_{k}(z)e^{(\omega+k)z}
&=\sum_{k=0}^{n_{l}}\left(\sum_{j=0}^{r-1}p_{j,k}z^{j}\right)e^{(\omega+k)z}+\sum_{k=n_{l}+1}^{n_{l-1}}\left(\sum_{j=0}^{r-s_{l}-1}p_{j,k}z^{j}\right)e^{(\omega+k)z}\\
&+\cdots +\sum_{k=n_{3}+1}^{n_{2}}\left(\sum_{j=0}^{s_{1}+s_{2}-1}p_{j,k}z^{j}\right)e^{(\omega+k)z}+\sum_{k=n_{2+1}}^{n_{1}}\left(\sum_{j=0}^{s_{1}-1}p_{j,k}z^{j}\right)e^{(\omega+k)z}\\
&=\sum_{k=0}^{n_{1}}\left(\sum_{j=0}^{s_{1}-1}p_{j,k}z^{j}\right)e^{(\omega+k)z}+\sum_{k=0}^{n_{2}}\left(\sum_{j=s_{1}}^{s_{2}+s_{1}-1}p_{j,k}z^{j}\right)e^{(\omega+k)z}\\
&+\cdots +\sum_{k=0}^{n_{l-1}}\left(\sum_{j=r-s_{l}-s_{l-1}}^{r-s_{l}-1}p_{j,k}z^{j}\right)e^{(\omega+k)z}+\sum_{k=0}^{n_{l}}\left(\sum_{j=r-s_{l}}^{r-1}p_{j,k}z^{j}\right)e^{(\omega+k)z}\enspace.
\end{align*}}}
Acting $\mathcal{T}$ on the both sides of the above equalities, 
{\small{\begin{align*}
&\mathcal{T}\left(\sum_{k=0}^{n_{1}}\mathfrak{P}_{k}(z)e^{(\omega+k)z}\right)\\
&=\sum_{k=0}^{n_{1}}\left(\sum_{j=0}^{s_{1}-1}p_{j,k}\log^{j}(1+z)\right)(1+z)^{\omega+k}+\sum_{k=0}^{n_{2}}\left(\sum_{j=s_{1}}^{s_{2}+s_{1}-1}p_{j,k}\log^{j}(1+z)\right)(1+z)^{\omega+k}+\cdots \\
&+\sum_{k=0}^{n_{l-1}}\left(\sum_{j=r-s_{l}-s_{l-1}}^{r-s_{l}-1}p_{j,k}\log^j(1+z)\right)(1+z)^{\omega+k}+\sum_{k=0}^{n_{l}}\left(\sum_{j=r-s_{l}}^{r-1}p_{j,k}\log^{j}(1+z)\right)(1+z)^{\omega+k}\\
&=\sum_{j=0}^{s_{1}-1}\left(\sum_{k=0}^{n_{1}}{p_{j,k}(1+z)^k}\right)(1+z)^{\omega}\log^{j}(1+z)+\sum_{j=s_{1}}^{s_{2}+s_{1}-1}\left(\sum_{k=0}^{n_{2}}{p_{j,k}(1+z)^k}\right)(1+z)^{\omega}\log^{j}(1+z)\\
&+\cdots+\sum_{j=r-s_{l}}^{r-1}\left(\sum_{k=0}^{n_{l}}{p_{j,k}(1+z)^k}\right)(1+z)^{\omega}\log^{j}(1+z)\enspace.
\end{align*}}}
By tracing the above equalities in reverse, we also conclude that 
\[
\mathcal{T}^{-1}(R(z))\in V_{\boldsymbol{r}} \ \text{for} \ R(z)\in W_{\boldsymbol{n}}.
\] 
This completes the proof of Proposition~\ref{diagonal normality log}. 
\end{proof}

Let $k$ be an integer with $1\le k \le r_1+\cdots +r_m$. 
Applying Proposition $\ref{diagonal normality log}$ for $$\boldsymbol{n}_k=(\underbrace{n+1,\ldots,n+1}_k,n,\ldots,n)\in \N^{r_1+\cdots+r_m}\enspace,$$ we obtain the following explicit weight $\boldsymbol{n}_k$ Pad\'{e} approximants of $((1+z)^{\omega_i}\log^{j_i}(1+z))_{\substack{1\le i \le m \\ 0\le j_i \le r_i-1}}$.

\begin{corollary}\label{main pade}
We use the same notation as above. Put $k=r_1+\cdots+r_u+s_{u+1}$ for some $0\le u \le m-1$ and $1\le s_{u+1} \le r_{u+1}$, where we set $r_1+\cdots+r_u=0$ if $u=0$.
Define a rational function 
\begin{align}\label{fk}
F_k(x)=\dfrac{1}{\prod_{i=1}^u\prod_{h=0}^{n+1}(x-\omega_i-h)^{r_i}}\cdot \dfrac{1}{(x-\omega_{u+1}-n-1)^{s_{u+1}}} \cdot \dfrac{1}{\prod_{i'={u+1}}^m \prod_{h'=0}^{n}(x-\omega_{i'}-h')^{r_{i'}}}\enspace.
\end{align}
Let $\{p_{k,h,i,j_i}(\boldsymbol{\omega})\}\subset K$ such that
\begin{align} \label{def akij}
F_k(x)=\sum_{i=1}^m\sum_{h=0}^{n+1}\sum_{j_i=1}^{r_i}\dfrac{p_{k,h,i,j_i}(\boldsymbol{\omega})}{(x-\omega_i-h)^{j_i}}\enspace.
\end{align}
Define polynomials 
\begin{align*}
P_{k,i,j_i}(z)=\sum_{h=0}^{n+1}\dfrac{p_{k,h,i,j_i+1}(\boldsymbol{\omega})}{j_i!}(1+z)^h \ \ \text{for} \ \ 1\le i \le m, \ 0\le j_i \le r_i-1\enspace.
\end{align*}
Then the vector $(P_{k,i,j_i}(z))_{\substack{1\le i \le m \\ 0\le j_i \le r_i-1}}$ is a weight $\boldsymbol{n}_k$ Pad\'{e} approximant of $((1+z)^{\omega_i}\log^{j_i}(1+z))_{{1\le i \le m, 0\le j_i \le r_i-1}}$.
\end{corollary}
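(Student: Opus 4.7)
The strategy is to invoke Propositions \ref{Pade e} and \ref{diagonal normality log} in succession: the rational function $F_k(x)$ is engineered so that its partial fraction decomposition yields an explicit Pad\'e approximant of a suitable system of exponential functions, which the isomorphism $\mathcal{T}$ then transports to the desired approximant of $\bigl((1+z)^{\omega_i}\log^{j_i}(1+z)\bigr)$.

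The first step is to match the data. The index $\boldsymbol{n}_k=(\underbrace{n+1,\ldots,n+1}_k,n,\ldots,n)$ corresponds, in the notation preceding Proposition \ref{diagonal normality log}, to the choices $l_i=1,\ s_{i,1}=r_i,\ n_{i,1}=n+1$ for $1\le i\le u$; $l_{u+1}=2$ with $s_{u+1,1}=s_{u+1},\ s_{u+1,2}=r_{u+1}-s_{u+1},\ n_{u+1,1}=n+1,\ n_{u+1,2}=n$; and $l_i=1,\ s_{i,1}=r_i,\ n_{i,1}=n$ for $u+2\le i\le m$. Unpacking the associated weight $\boldsymbol{r}(i)$ shows that its entries coincide exactly with the exponents of the linear factors in the denominator of $F_k(x)$: namely $(x-\omega_i-h)^{r_i}$ for $1\le i\le u,\ 0\le h\le n+1$, the factors $(x-\omega_{u+1}-h)^{r_{u+1}}$ for $0\le h\le n$ together with the single factor $(x-\omega_{u+1}-n-1)^{s_{u+1}}$, and $(x-\omega_i-h)^{r_i}$ for $u+2\le i\le m,\ 0\le h\le n$.

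With the data aligned, Proposition \ref{Pade e} applied to the pairwise distinct frequencies $(\omega_i+h)$ under this weight, with partial fraction coefficients $p_{k,h,i,j_i}(\boldsymbol{\omega})$ read off from $F_k(x)$, furnishes the explicit element
$$\mathfrak{R}_k(z)\;=\;\sum_{i=1}^{m}\sum_{h}\left(\sum_{j_i\ge 0}\frac{p_{k,h,i,j_i+1}(\boldsymbol{\omega})}{j_i!}\,z^{j_i}\right)e^{(\omega_i+h)z}\;\in\;V_{\boldsymbol{r}}.$$
Proposition \ref{diagonal normality log} then yields $\mathcal{T}(\mathfrak{R}_k(z))\in W_{\boldsymbol{n}_k}$. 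Since $\mathcal{T}$ sends $e^{\xi z}$ to $(1+z)^{\xi}$ and $z^{j}$ to $\log^{j}(1+z)$, this transform is
$$\sum_{i=1}^{m}(1+z)^{\omega_i}\sum_{j_i}\left(\sum_{h}\frac{p_{k,h,i,j_i+1}(\boldsymbol{\omega})}{j_i!}(1+z)^h\right)\log^{j_i}(1+z),$$
whose coefficient of $(1+z)^{\omega_i}\log^{j_i}(1+z)$ is exactly $P_{k,i,j_i}(z)$. The degree constraints imposed by $\boldsymbol{n}_k$ then follow automatically: $p_{k,h,i,j_i+1}(\boldsymbol{\omega})$ vanishes for those pairs $(h,j_i)$ lying outside the support dictated by the exponent pattern of $F_k(x)$ (for instance, $p_{k,n+1,i,\cdot}=0$ when $u+2\le i\le m$, forcing $\deg P_{k,i,j_i}\le n$ there, and $p_{k,n+1,u+1,j_{u+1}+1}=0$ when $j_{u+1}\ge s_{u+1}$).

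The only real obstacle is combinatorial bookkeeping: ensuring that the three index structures---the partial fraction exponent pattern of $F_k(x)$, the exponential weight $\boldsymbol{r}$ produced by the recipe before Proposition \ref{diagonal normality log}, and the target weight $\boldsymbol{n}_k$ on the product system---are mutually consistent throughout the translation. Once this correspondence is laid out cleanly, the conclusion is an immediate consequence of the two cited propositions.
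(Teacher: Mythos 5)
Your proposal is correct and follows essentially the same route as the paper: identify the index data $(l_i,s_{i,\cdot},n_{i,\cdot})$ so that $\boldsymbol{n}_k=(\tilde{\boldsymbol{n}}(1),\ldots,\tilde{\boldsymbol{n}}(m))$, read off the weight $\boldsymbol{r}$ from the pole orders of $F_k(x)$, invoke Proposition \ref{Pade e} to place $\mathfrak{R}_k(z)$ in $V_{\boldsymbol{r}}$, and transport it to $W_{\boldsymbol{n}_k}$ via Proposition \ref{diagonal normality log}. Your explicit remarks on the vanishing of the coefficients $p_{k,n+1,i,\cdot}$ (which enforce the degree bounds) make explicit what the paper leaves implicit in the statement of Proposition \ref{diagonal normality log}, but the argument is the same.
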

\begin{proof}
Define
\begin{align*}
&\tilde{\boldsymbol{n}}(i)=\begin{cases}
(n+1,\ldots,n+1)\in \N^{r_i} & \ \text{if} \ 1\le i \le u\\
(\underbrace{n+1,\ldots, n+1}_{s_{u+1}},n,\ldots,n) \in \N^{r_{u+1}} & \ \text{if} \ i=u+1\\
(n,\ldots,n) \in \N^{r_i} & \ \text{if} \ u+2 \le i \le m\enspace,
\end{cases}\\
&\tilde{\boldsymbol{r}}(i)=\begin{cases}
(r_i-1,\ldots,r_i-1)\in \Z^{n+2} & \ \text{if} \ 1\le i \le u\\
(r_{u+1}-1,\ldots,r_{u+1}-1,s_{u+1}-1) \in \Z^{n+2} & \ \text{if} \ i=u+1\\
(r_i-1,\ldots,r_i-1) \in \Z^{n+1} & \ \text{if} \ u+2 \le i \le m\enspace.
\end{cases}
\end{align*}
Then the indice $\boldsymbol{n}_k$ is $(\tilde{\boldsymbol{n}}(1),\ldots,\tilde{\boldsymbol{n}}(m))$.
Put $$\mathfrak{P}_{k,i,h}(z)=\sum_{j_i=0}^{r_i-1}\dfrac{p_{k,h,i,j_i}(\boldsymbol{\omega})}{j_i!}z^{j_i} \ \ \text{for} \ \ 1\le i \le m, \ 0\le h \le n+1\enspace.$$
By Proposition $\ref{Pade e}$, the vector of polynomials $(\mathfrak{P}_{k,i,h}(z))$ is a weight $\boldsymbol{r}=(\tilde{\boldsymbol{r}}(1),\ldots, \tilde{\boldsymbol{r}}(m))$ Pad\'{e} approximant of 
$(e^{(\omega_i+h)z})_{\substack{1\le i \le u+1 \\ 0\le h \le n+1}} \cup (e^{(\omega_{i'}+h')z})_{\substack{u+2 \le i' \le m \\ 0 \le h' \le n}}.$ 
Put $$\mathfrak{R}_k(z)=\sum_{i=1}^{u+1}\sum_{h=0}^{n+1}\mathfrak{P}_{k,i,h}(z)e^{(\omega_i+h)z}+\sum_{i'=u+2}^{m}\sum_{h'=0}^{n}\mathfrak{P}_{k,i',h'}(z)e^{(\omega_{i'}+h')z}\enspace.$$
Subsequently, we find that $\mathfrak{R}_k(z)\in V_{\boldsymbol{r}}$ and, based on Proposition $\ref{diagonal normality log}$, conclude 
$$\mathcal{T}(\mathfrak{R}_k(z))=\sum_{i=1}^m \sum_{j_i=0}^{r_i-1} P_{k,i,j_i}(z)(1+z)^{\omega_i}\log^{j_i}(1+z)\in W_{\boldsymbol{n}_k}\enspace.$$
This completes the proof of Corollary $\ref{main pade}$.
\end{proof}

\section{Estimates}
From this section to the last, we use the following notations. 
Let $m,r_1,\ldots,r_m\in \N$. Let $K$ be an algebraic number field and $\omega_1,\ldots,\omega_m\in \Q$ with $0\le \omega_1<\cdots<\omega_m<1$.
We put $r=\max(r_1,\ldots,r_m)$.
Unless stated otherwise, the Landau symbol $o$ refers to the limit as $n$ tends to infinity.

\medskip

Let $k,n$ be non-negative integers with $1\le k \le {{\sum_{i=1}^m}}r_i$ and %$\boldsymbol{n}_k=(\underbrace{n+1,\ldots,n+1}_{k},n,\ldots,n)\in \N^{r_1+\cdots + r_m}$.
$$P_{k,i,j_i,n+1}(z)=P_{k,i,j_i}(z)=\sum_{h=0}^{n+1}\dfrac{p_{k,h,i,j_i+1}(\boldsymbol{\omega})}{j_i!}(1+z)^h\in \Q[z] \ \ \text{for} \ \ 1\le i \le m, \ 0\le j_i \le r_i-1\enspace,$$ the polynomials defined in Corollary $\ref{main pade}$.
Put 
\begin{align*}
R_{k,n+1}(z)=R_{k}(z)=\sum_{i=1}^m\sum_{j_i=0}^{r_i-1}P_{k,i,j_i}(z)(1+z)^{\omega_i}\log^{j_i}(1+z)\in \Q[[z]]\enspace.
\end{align*}
Firstly we give a positive integer which annihilates the denominator of each coefficients of $P_{k,i,j_i}(z)$. 
\begin{lemma} \label{trivial}
Let $g_1(t),\ldots,g_m(t)\in \Q[[t]]$. Assume there exist $A_i,B_i\in \Z$ with $A_ig_i(B_it)\in \Z[[t]]$ for $1\le i \le m$.
Put $g_1(t)\cdots g_m(t)=\sum_{k=0}^{\infty}c_kt^k$. Then $$\prod_{i=1}^mA_iB^k_i\cdot c_k\in \Z \ \ \text{for} \  \ k\ge 0 \enspace.$$
\end{lemma}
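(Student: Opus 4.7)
The plan is to reduce the claim to a direct bookkeeping on the Cauchy product of power series. Write $g_i(t)=\sum_{n\ge 0}a_{i,n}t^n$ for $1\le i\le m$, and set $b_{i,n}:=A_iB_i^n a_{i,n}$. By hypothesis, $A_ig_i(B_it)=\sum_{n\ge 0}b_{i,n}t^n$ lies in $\Z[[t]]$, so each $b_{i,n}\in\Z$.

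Next I would expand the product using the Cauchy formula:
\[
c_k=\sum_{\substack{n_1,\ldots,n_m\ge 0 \\ n_1+\cdots+n_m=k}}a_{1,n_1}\cdots a_{m,n_m}
=\sum_{\substack{n_1,\ldots,n_m\ge 0 \\ n_1+\cdots+n_m=k}}\prod_{i=1}^m\frac{b_{i,n_i}}{A_iB_i^{n_i}}.
\]
Multiplying through by $\prod_{i=1}^mA_iB_i^k$ and pulling $B_i^k=B_i^{n_i}\cdot B_i^{k-n_i}$ out of the denominator gives
\[
\prod_{i=1}^mA_iB_i^k\cdot c_k
=\sum_{\substack{n_1,\ldots,n_m\ge 0 \\ n_1+\cdots+n_m=k}}\prod_{i=1}^m b_{i,n_i}B_i^{k-n_i}.
\]

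Finally I would observe that in every summand the exponent $k-n_i$ is a non-negative integer (because $n_j\ge 0$ for all $j$ forces $n_i\le n_1+\cdots+n_m=k$), so $B_i^{k-n_i}\in\Z$ for each $i$; combined with $b_{i,n_i}\in\Z$, each summand is an integer, and the finite sum is an integer as well. This concludes the argument.

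There is no real obstacle: the only subtlety is noticing that the ``excess'' factor $B_i^{k-n_i}$ has a non-negative exponent thanks to $n_i\le k$, which is exactly what lets us convert the per-variable integrality statement $A_iB_i^{n_i}a_{i,n_i}\in\Z$ into a uniform integrality statement for $\prod_i A_iB_i^k\cdot c_k$.
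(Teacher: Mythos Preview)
Your proof is correct and follows essentially the same approach as the paper: expand the Cauchy product, use that $A_iB_i^{n_i}a_{i,n_i}\in\Z$, and absorb the remaining factor $B_i^{k-n_i}$ (with non-negative exponent since $n_i\le k$) to conclude. In fact you spell out this last ``excess factor'' step more explicitly than the paper does.
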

\begin{proof}
Put $g_i(t)=\sum_{k=0}^{\infty}c_{i,k}t^k$. Notice that we have $A_iB^k_ic_{i,k}\in \Z$ by the assumption.
Since $$c_k=\sum_{\substack{1\le j_1,\ldots,j_m\le k \\ j_1+\cdots+j_m=k}}c_{1,j_1}\ldots c_{m,j_m}\enspace,$$
we obtain $$\prod_{i=1}^mA_iB^k_i\cdot c_k\in \Z\enspace.$$
\end{proof}

For a positive integer $n$ and $\omega\in \Q\setminus \Z_{<0}$, put 
$$D_{n}(\omega)={\rm{den}}\left(\dfrac{0!}{(\omega)_0},\ldots,\dfrac{n!}{(\omega)_n}\right), \ \ d_n(\omega)={\rm{den}}\left(\dfrac{1}{\omega},\ldots,\dfrac{1}{\omega+n-1}\right)\enspace.$$
For $\omega=1$, the integer $d_n(1)$ is the least common multiple of $1,\ldots,n$ and denote $d_n$.
\begin{lemma} \label{denominator}
Let $i,n$ be non-negative integers with $1\le i \le m$.  
We have
$$(r_i-1)!(n+2)!^{r_1+\cdots+r_m}\left[\prod_{\substack{1\le i'\le m \\ i'\neq i}}D_{n+2}(\omega_i-\omega_{i'})^2d_{n+2}(\omega_i-\omega_{i'})\right]^{r_i'}d_{n+1}^{r_i}P_{k,i,j_i}(z)\enspace,$$
for $1\le k \le  r_1+\cdots+r_m$ and $0\le j_i\le r_i-1$.

\

In particular we have $$(r-1)!(n+2)!^{r_1+\cdots+r_m}\left(\left[\prod_{1\le i_1<i_2\le m} D_{n+2}(\omega_{i_2}-\omega_{i_1})^2d_{n+2}(\omega_{i_2}-\omega_{i_1})\right]d_{n+1}\right)^{r}P_{k,i,j_i}(z)\in \Z[z]\enspace,$$
for $1\le k \le r_1+\cdots+r_m$, $1\le i \le m$ and $0\le j_i\le r_i-1$.
\end{lemma}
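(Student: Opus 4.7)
The plan is to express each partial-fraction coefficient $p_{k,h,i,j_i+1}(\boldsymbol{\omega})$ as an explicit Taylor coefficient of a product of simple rational functions, then apply Lemma~\ref{trivial} factor by factor to track the denominator.

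Fix $1\le i\le m$, $0\le h\le n+1$, and $0\le j_i\le r_i-1$, and let $\rho$ denote the order of the pole of $F_k(x)$ at $x=\omega_i+h$ (so $\rho\in\{r_i,s_{u+1},0\}$ depending on the location). Setting $t=x-\omega_i-h$, the function
$$G_{k,i,h}(t):=t^{\rho}F_k(\omega_i+h+t)=\prod_{(i',h')\neq(i,h)}\bigl(t+(\omega_i-\omega_{i'})+(h-h')\bigr)^{-r_{i'}(h')},$$
where $r_{i'}(h')$ is the local pole order at $(i',h')$, is holomorphic at $t=0$, and the standard residue/partial-fraction identity gives
$$p_{k,h,i,j_i+1}(\boldsymbol{\omega})=[t^{\rho-j_i-1}]\,G_{k,i,h}(t),$$
a Taylor coefficient of $G_{k,i,h}(t)$ of degree at most $r_i-1$.

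I then apply Lemma~\ref{trivial} to this factorization in two regimes. In \emph{Regime~1} ($i'=i$), each factor is $(t+m)^{-r}$ with integer $m=h-h'$, $0<|m|\le n+1$; the pair $A=(n+1)!^{r}$, $B=d_{n+1}$ clears it, since $m\mid d_{n+1}$ makes $(1+d_{n+1}t/m)^{-r}\in\mathbb{Z}[[t]]$ and $m\mid(n+1)!$ absorbs the leading $m^{-r}$. In \emph{Regime~2} ($i'\neq i$), each factor is $(t+\alpha)^{-r}$ with $\alpha=(\omega_i-\omega_{i'})+(h-h')$. Grouping the factors for fixed $i'$, the constant-in-$t$ part reassembles into a shifted Pochhammer of the form $(\omega_i-\omega_{i'}+h-H)_{H+1}^{-r_{i'}}$, which factors as
$$(\omega_i-\omega_{i'})_{h+1}^{-r_{i'}}\,\bigl(1-(\omega_i-\omega_{i'})\bigr)_{H-h}^{-r_{i'}}$$
up to sign. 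By definition of $D_{n+2}$, each of these two Pochhammer pieces is cleared by $(n+2)!\cdot D_{n+2}(\omega_i-\omega_{i'})$ raised to the $r_{i'}$-th power (accounting for the square on $D_{n+2}(\omega_i-\omega_{i'})$), while the higher Taylor coefficients contribute factors $\alpha^{-k}$ absorbed by $d_{n+2}(\omega_i-\omega_{i'})^{r_{i'}}$.

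Combining the contributions from all factors via Lemma~\ref{trivial} yields
$$(n+2)!^{r_1+\cdots+r_m}\prod_{i'\neq i}\bigl[D_{n+2}(\omega_i-\omega_{i'})^2 d_{n+2}(\omega_i-\omega_{i'})\bigr]^{r_{i'}}d_{n+1}^{r_i}\cdot p_{k,h,i,j_i+1}(\boldsymbol{\omega})\in\mathbb{Z};$$
multiplying further by $(r_i-1)!$ absorbs the $1/j_i!$ in the definition of $P_{k,i,j_i}(z)$ (since $j_i!\mid(r_i-1)!$), and since $\binom{h}{k}\in\mathbb{Z}$ expanding $(1+z)^h$ places everything in $\mathbb{Z}[z]$. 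The uniform ``in particular'' statement follows by replacing each $r_i,r_{i'}$ with $r=\max_{i''}r_{i''}$ and re-indexing $\prod_{i'\neq i}$ as $\prod_{1\le i_1<i_2\le m}$, the extra exponent $r$ on $d_{n+1}$ being enough to absorb both orientations of $\omega_i-\omega_{i'}$. \textbf{The main obstacle} is Regime~2: the shifts $\alpha=(\omega_i-\omega_{i'})+(h-h')$ range over both positive and negative translates, so one must split the Pochhammer product into its ``positive-shift'' and ``negative-shift'' halves and use a separate copy of $D_{n+2}(\omega_i-\omega_{i'})$ for each, which is precisely what produces the exponent $2$ on $D_{n+2}(\omega_i-\omega_{i'})$ in the final denominator bound; once this bookkeeping is settled, the rest is a routine application of Lemma~\ref{trivial}.
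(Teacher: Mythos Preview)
Your approach is essentially the same as the paper's: it too localizes at $t=x-\omega_i-h$, factors $t^{\rho}F_k(\omega_i+h+t)=\prod_{i'}G_{i',h}(t)$ into the $i'=i$ block (integer shifts, cleared by $(n+1)!^{r_i}$ and $d_{n+1}$) and the $i'\neq i$ blocks (split into the two Pochhammer halves $(\omega_i-\omega_{i'})_{h+1}$ and $(\omega_{i'}-\omega_i+1)_{n+1-h}$, each cleared by a copy of $D_{n+2}(\omega_i-\omega_{i'})$, with $d_{n+2}(\omega_i-\omega_{i'})$ handling the $t$-expansion), and then applies Lemma~\ref{trivial}. The only cosmetic difference is that the paper writes out each $G_{i',h}(t)$ explicitly before invoking Lemma~\ref{trivial}, whereas you describe the factorization in words; one small point to tighten is that in Regime~1 you should group all $(t+m)^{-r_i}$ factors for $i'=i$ into a single $g$ (as the paper does) rather than treat them individually, since Lemma~\ref{trivial} multiplies the $B_i$'s and you want a single $B=d_{n+1}$, not $\prod_m m$.
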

\begin{proof}
We only consider the case of $k=r_1+\cdots+r_m$, since we obtain the assertion by the similar way for general $k$.
For $1\le i'\le m$ and $0\le h \le n+1$, we define rational functions 
{\small{
$$
G_{i',h}(t)=
\begin{cases}
&\left[\dfrac{(-1)^{(n+1-h)}}{h!(n+1-h)!}\cdot \dfrac{1}{t}\cdot \displaystyle{\prod_{\lambda=1}^h}\left(1+\dfrac{t}{\lambda}\right)^{-1}\cdot \displaystyle{\prod_{\lambda=1}^{n+1-h}}\left(1-\dfrac{t}{\lambda}\right)^{-1}\right]^{r_i}        \ \ \ \text{if} \ i'=i\\
& \\
&\left[\dfrac{1}{(\omega_i-\omega_{i'})_{h+1}(\omega_{i'}-\omega_i+1)_{n+1-h}}\cdot \displaystyle{\prod_{\lambda=0}^h}\left(1+\dfrac{t}{\omega_{i'}-\omega_i+\lambda}\right)^{-1}\cdot 
\displaystyle{\prod_{\lambda=1}^{n+1-h}}\left(1-\dfrac{t}{\omega_{i'}-\omega_i-\lambda}\right)^{-1}\right]^{r_{i'}}\enspace,      
\end{cases}
$$}}
where second one is for $i'\neq i$.
Then $$F_{r_1+\cdots+r_m}(x)=\prod_{i'=1}^mG_{i',h}(x-\omega_{i'}-h)\enspace,$$
(see the definition of $F_{r_1+\cdots+r_m}(x)$ in $(\ref{fk})$) and
\begin{align} \label{denomi 1}
\begin{cases}
(n+1)!^{r_{i}}t^{r_i}G_{i,h}(d_{n+1}t)\in \Z[[t]] & \ \ \text{if} \ i'=i\enspace,\\
(n+2)!^{r_{i'}}D_{n+1}(\omega_i-\omega_{i'})^{2r_{i'}}G_{i',h}\left(d_{n+2}(\omega_i-\omega_{i'})t \right)\in \Z[[t]] & \ \ \text{if} \ i'\neq i\enspace,
\end{cases}
\end{align}
for $0\le h \le n+1$.
By the definition of $p_{n+1,h,i,j_i}(\boldsymbol{\omega})$ (see $(\ref{def akij})$), using Equation $(\ref{denomi 1})$ and Lemma $\ref{trivial}$, $$(n+2)!^{r_1+\cdots+r_m}\left[\prod_{\substack{1\le i \le m \\ i'\neq i}}D_{n+2}(\omega_i-\omega_{i'})^2d_{n+2}(\omega_i-\omega_{i'})\right]^{r_i'}d_{n+1}^{r_i}p_{n+1,h,i,j_i}(\boldsymbol{\omega})\in \Z\enspace.$$
Therefore, by the definition of $P_{k,i,j_i}(z)$, we obtain the assertion.
\end{proof}
%\begin{lemma}\label{valuation} 
%Let $n$ be a strictly positive integer and $\omega\in \Q\setminus\Z$.
%We have
%$$
%\limsup_{n\to \infty}\dfrac{1}{n}\log\, \max\left(D_n(\omega), d_n(\omega)\right) \le \dfrac{{\rm{den}}(\omega)}{\varphi({\rm{den}}(\omega))}\enspace,
%$$
%where $\varphi$ is Euler's totient function.
%\end{lemma}
%\begin{proof}
%By ${\cite[Lemma \ 4.2]{KP}}$, we have $$\limsup_{n\to \infty}\dfrac{1}{n}\log\, D_n(\omega)\le \dfrac{{\rm{den}}(\omega)}{\varphi({\rm{den}}(\omega))}\enspace.$$
%Now we prove $$\limsup_{n\to \infty}\dfrac{1}{n}\log\, d_n(\omega)\le \dfrac{{\rm{den}}(\omega)}{\varphi({\rm{den}}(\omega))}\enspace.$$
%\end{proof}
%Put $b={\rm{den}}(\omega_i-\omega_{i'})_{1\le i<i'\le m}$ and $a=\max_{1\le i<i'\le m}(|b(\omega_i-\omega_{i'})|)$
Our next aim is to find an estimate for the coefficients of $P_{k,i,j_i}(z)$.
\begin{lemma} \label{absolute value a}
We have
\begin{align*}
\max_{\substack{1\le k \le r_1+\cdots+r_m \\ 1 \le i \le m, 0\le j_i \le r_i-1 \\ 0\le h \le n+1}}|p_{k,h,i,j_i+1}(\boldsymbol{\omega})|\le \left(\dfrac{2^n}{(n+1)!}\right)^{r_1+\cdots+r_m}\cdot e^{o(n)} \enspace.
\end{align*}
\end{lemma}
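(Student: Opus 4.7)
The plan is to realize each coefficient $p_{k,h,i,j_i+1}(\boldsymbol{\omega})$ as a Taylor coefficient at the pole $x=\omega_i+h$ of $F_k(x)$, and then bound it by factoring out the value at that pole and estimating the remaining regular factor via its explicit product expansion. Let $e$ denote the multiplicity of the pole at $\omega_i+h$: $e=r_i$ in general, with $e=s_{u+1}$ at the exceptional pole $(i,h)=(u+1,n+1)$. From the partial-fraction identity (\ref{def akij}),
\[
p_{k,h,i,j_i+1}(\boldsymbol{\omega}) \;=\; [t^{\,e-j_i-1}]\,G(t), \qquad G(t):=t^{e}F_k(\omega_i+h+t),
\]
and $G$ is holomorphic near the origin with the explicit factorization
\[
G(t) \;=\; c_0\prod_{(i',h')\neq(i,h)}\!\Bigl(1+\tfrac{t}{a_{i',h'}}\Bigr)^{-e_{i',h'}},\qquad a_{i',h'}:=\omega_i+h-\omega_{i'}-h',\ \ c_0:=\prod a_{i',h'}^{\,-e_{i',h'}}.
\]
The task thus reduces to bounding $|c_0|$ and the normalized Taylor coefficient $|c_\nu/c_0|$ for $\nu=e-j_i-1\le r-1$.

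For $|c_0|$, I would group the factors by $i'$. For $i'=i$ with $h'\neq h$ (over the appropriate range $\{0,\ldots,N_i\}$ with $N_i\in\{n,n+1\}$), the factors $|h-h'|$ give $(h!(N_i-h)!)^{-r_i}\le (2^{n+1}/(n+1)!)^{r_i}$ via $\binom{N_i}{h}\le 2^{N_i}$. For $i'\neq i$, the product telescopes, exactly as in the proof of Lemma \ref{denominator}, into Pochhammer symbols:
\[
\prod_{h'=0}^{N_{i'}}|a_{i',h'}| \;=\; \bigl|(\omega_i-\omega_{i'})_{h+1}\bigr|\cdot\bigl|(1+\omega_{i'}-\omega_i)_{N_{i'}-h}\bigr|.
\]
Since $\omega_i-\omega_{i'}\notin\mathbb{Z}$, the asymptotic $\Gamma(h+x+1)/\Gamma(h+1)\sim h^x$ provides lower bounds of shape $c\cdot h!/n$ and $c\cdot(N_{i'}-h)!/n$ on these two Pochhammer symbols, with $c>0$ depending only on $\omega_i,\omega_{i'}$. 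Inverting, reapplying $\binom{N_{i'}}{h}\le 2^{N_{i'}}$, and multiplying over all $i'$ absorbs the polynomial loss into $e^{o(n)}$, yielding
\[
|c_0| \;\le\; \Bigl(\tfrac{2^n}{(n+1)!}\Bigr)^{r_1+\cdots+r_m}\cdot e^{o(n)}.
\]

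For $|c_\nu/c_0|$, expanding each factor and taking absolute values coefficientwise gives an upper bound by the $\nu$-th coefficient of $\prod(1-|a_{i',h'}^{-1}|t)^{-e_{i',h'}}$, itself dominated coefficientwise by $(1-Mt)^{-E}$ with $M:=\max|a_{i',h'}^{-1}|=O(1)$ and $E:=\sum e_{i',h'}=O(n)$. Hence
\[
|c_\nu/c_0|\;\le\;\binom{E+\nu-1}{\nu}M^\nu\;\le\; E^\nu M^\nu\;=\;O(n^{r-1})\;=\;e^{o(n)},
\]
since $\nu\le r-1$ is bounded as $n\to\infty$. Combining the two estimates proves the lemma.

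The main obstacle will be the sharpness required in bounding $|c_0|$ when $i'\neq i$: the distances $|a_{i',h'}|$ can be as small as the fractional part of $\omega_{i'}-\omega_i$, and one must verify via the Pochhammer-symbol telescoping that this causes only a polynomial-in-$n$ loss, thereby recovering the rate $(2^n/(n+1)!)^{r_{i'}}$ with no exponential overhead. The remaining bookkeeping -- the reduced multiplicity $s_{u+1}$ at the exceptional pole and the varying ranges of $h'$ across blocks -- does not affect the asymptotics.
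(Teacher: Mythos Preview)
Your argument is correct and reaches the same bound as the paper, but by a genuinely different route.

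The paper represents $p_{k,h,i,j_i+1}(\boldsymbol{\omega})$ by the residue integral
\[
p_{k,h,i,j_i+1}(\boldsymbol{\omega})=\frac{1}{2\pi\sqrt{-1}}\int_{\mathcal{C}_{i,h}}(\zeta-\omega_i-h)^{j_i}F_k(\zeta)\,d\zeta
\]
over a circle of radius $R/2$, and then bounds $\max_{|\zeta-\omega_i-h|=R/2}|F_k(\zeta)|$. On that circle the distances become $|h-h'|-\tfrac12$, so the products involve half-integers; the paper handles these via the identity
\[
\frac{h!(n+1-h)!}{(2h)!(2(n+1-h))!}=\binom{2(n+1)}{2h}\binom{2(n+1)}{n+1}^{-1}\binom{n+1}{h}^{-1}\frac{1}{(n+1)!}
\]
together with Mahler's inequality $\binom{2(n+1)}{2h}\binom{2(n+1)}{n+1}^{-1}\binom{n+1}{h}^{-1}\le (n+2)/2^{n+1}$ from \cite{M2}. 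By contrast, you evaluate the regularized function $G(t)=t^{e}F_k(\omega_i+h+t)$ \emph{at} the pole rather than on a nearby circle, so the distances are exactly $|h-h'|$ and the bound for the $i'=i$ block collapses to the elementary $h!(N_i-h)!\ge N_i!/2^{N_i}$. The price you pay is having to control the higher Taylor coefficients $c_\nu/c_0$ separately, but your majorant-series bound $|c_\nu/c_0|\le\binom{E+\nu-1}{\nu}M^\nu=O(n^{r-1})$ is both correct and cheap, since $\nu\le r-1$ is fixed. For the blocks with $i'\neq i$, both arguments amount to the same Pochhammer/Gamma asymptotics, incurring only polynomial losses that are absorbed into $e^{o(n)}$. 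Your final remark about the exceptional multiplicity $s_{u+1}$ and the varying ranges is accurate: these perturb $|c_0|$ by at most a factor polynomial in $n$.

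In short, your method trades the Cauchy estimate plus Mahler's combinatorial identity for a direct coefficientwise majorization; it is a legitimate and somewhat more elementary alternative to the paper's contour-integral computation.
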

\begin{proof}
Fix $1\le k \le r_1+\cdots+r_m, 1 \le i \le m, 0\le j_i \le r_i-1$ and $0\le h \le n+1$. Put $$R=\begin{cases} {\displaystyle{\min_{1\le i \le m-1}}}\{\omega_{i+1}-\omega_i\} & \ \ \text{if} \ m\ge 2\enspace,\\ 1 & \ \ \text{if} \ m=1\enspace.\end{cases}$$ 
By the residue theorem,  $p_{k,h,i,j_i+1}(\boldsymbol{\omega})$ may also be written as 
\begin{align}\label{int rep a}
p_{k,h,i,j_i+1}(\boldsymbol{\omega})=\dfrac{1}{2\pi\sqrt{-1}} \int_{\mathcal{C}_{i,h}}(\zeta-\omega_i-h)^{j_i}F_k(\zeta) \, d\zeta \enspace,
\end{align}
where $F_k$ is the rational function defined in Equation $(\ref{fk})$ and $\mathcal{C}_{i,h}$ may be chosen as the circle of center $\zeta=\omega_i+h$ and radius $R/2$, described in the positive direction.
We now estimate $\displaystyle{\max_{x\in \C, |x-\omega_i-h|=R/2}} \left|F_k(x)\right|$.

\

In what follows, we only consider the case of $k=r_1+\cdots+r_m$, as similar arguments apply for any $1\le k\le r_1+\cdots+r_m-1$.
Since
\begin{align*}
F_{r_1+\cdots+r_m}(x)=\dfrac{1}{\prod_{i'=1}^m\prod_{h'=0}^{n+1}(x-\omega_{i'}-h')^{r_{i'}}}\enspace,
\end{align*}
it is enough to estimate 
$$\max_{\substack{x\in \C \\ |x-\omega_i-h|=R/2}} \left|\dfrac{1}{\prod_{h'=0}^{n+1}(x-\omega_{i'}-h')^{r_{i'}}}\right| \ \ \text{for} \ \ 1\le i' \le m \enspace.$$

\

\textbf{Case $\rm{\,I\,}$: $i'=i$.}

Let $0\le h'\le n+1$ and $x\in\{z\in\C\mid |z-\omega_i-h|=R/2\}$. Since,
\begin{align*}
|x-\omega_i-h'|&=|x-\omega_i-h+h-h'|\ge ||h-h'|-|x-\omega_i-h||
%&\ge
%\begin{cases}
%|h-h'|-R/2 & \ \ \text{if} \ h'\neq h\\
%R/2 & \ \ \text{if} \ h'=h
%\end{cases}
\ge
\begin{cases}
|h-h'|-1/2 & \ \ \text{if} \ h'\neq h\\
R/2 & \ \ \text{if} \ h'=h\enspace,
\end{cases}
\end{align*}
we obtain
$$
\max_{\substack{x\in \C \\ |x-\omega_i-h|=R}} \left|\dfrac{1}{\prod_{h'=0}^{n+1}(x-\omega_{i}-h')^{r_{i'}}}\right|\le \left(\dfrac{2}{R}\right)^{r_i}\cdot \left(\dfrac{4\cdot 4^nh!(n+1-h)!}{(2h)!(2(n+1-h)!)}\right)^{r_i}\enspace.
$$
Using the equality
\begin{align*}
\dfrac{h!(n+1-h)!}{(2h)!(2(n+1-h))!}&=\frac{(2(n+1))!}{(2h)!(2(n+1-h))!}\cdot\dfrac{(n+1)!(n+1)!}{(2(n+1))!}\cdot\dfrac{h!(n+1-h)!}{(n+1)!}\cdot \dfrac{1}{(n+1)!}\\
&=\binom{2(n+1)}{2h}\cdot {\binom{2(n+1)}{n+1}}^{-1}\cdot {\binom{n+1}{h}}^{-1}\cdot \dfrac{1}{(n+1)!}\enspace,
\end{align*}
and the inequality 
$$\binom{2(n+1)}{2h}\cdot {\binom{2(n+1)}{n+1}}^{-1}\cdot {\binom{n+1}{h}}^{-1}\le\dfrac{n+2}{2^{n+1}} \ \ \text{for} \ \ 0\le h \le n+1\enspace,$$
(see Proof of {\cite[Theorem $1$, p.~$376$]{M2}}), we conclude
\begin{align} \label{i'=i}
\max_{\substack{x\in \C \\ |x-\omega_i-h|=R}} \left|\dfrac{1}{\prod_{h'=0}^{n+1}(x-\omega_{i}-h')^{r_{i'}}}\right|\le \left(\dfrac{4(n+2)}{R}\right)^{r_i} \cdot \left(\dfrac{2^n}{(n+1)!}\right)^{r_i}\enspace.
\end{align}

\

\textbf{Case $\rm{I\hspace{-.01em}I}$: $i'>i$.}

Let $0\le h'\le n+1$ and $x\in\{z\in\C\mid |z-\omega_{i'}-h|=R/2\}$. Since,
\begin{align*}
|x-\omega_{i'}-h'|=|x-\omega_i-h+w_{i}-w_{i'}+h-h'|&\ge ||h-h'+\omega_{i}-\omega_{i'}|-|x-\omega_i-h||\\
%&\ge
%\begin{cases}
%|h-h'|-R/2 & \ \ \text{if} \ h'\neq h\\
%R/2 & \ \ \text{if} \ h'=h
%\end{cases}
&\ge
\begin{cases}
h'-h+\omega_{i'}-\omega_i-R/2 & \ \ \text{if} \ h+1\le h'\\
|h-h'+\omega_i-\omega_{i'}-R/2| & \ \ \text{if} \ h'=h-1,h\\
h-h'+\omega_i-\omega_{i'}-R/2 & \ \ \text{if} \ 0\le h'\le h-2
\end{cases}\\
&\ge 
\begin{cases}
h'-h-1/2 & \ \ \text{if} \ h+1\le h'\\
C_1 & \ \ \text{if} \ h'=h-1,h\\
h-h'-3/2 & \ \ \text{if} \ 0\le h'\le h-2\enspace,
\end{cases}
\end{align*}
where $C_1=\min\{|\omega_i-\omega_{i'}-R/2|,|1+\omega_i-\omega_{i'}-R/2|\}$.
Thus we obtain
$$
\max_{\substack{x\in \C \\ |x-\omega_i-h|=R/2}} \left|\dfrac{1}{\prod_{h'=0}^{n+1}(x-\omega_{i'}-h')^{r_{i'}}}\right|\le 
\dfrac{1}{C^{r_{i'}}_1}\cdot \left(\dfrac{4^n(n+1-h)!(h-1)!)}{(2(n+1-h))!(2h-2)!}\right)^{r_{i'}}\enspace.
$$ 
Using the equality
\begin{align*}
\dfrac{(n+1-h)!(h-1)!)}{(2(n+1-h))!(2h-2)!}=\binom{2n}{2(h-1)}\cdot {\binom{2n}{n}}^{-1}{\binom{n}{h-1}}^{-1}\cdot \dfrac{1}{n!}
%\dfrac{(n-h)!}{h!(2(n-h))!}=\dfrac{n!}{h!(n-h)!}\cdot \dfrac{(n-h)!(n-h)!}{(2(n-h))!}\cdot \dfrac{1}{n!}={\binom{n}{h}}\cdot{\binom{2(n-h)}{n-h}}^{-1}\cdot\dfrac{1}{n!}\enspace,
\end{align*}
and the inequality
$$
\binom{2n}{2(h-1)}\cdot {\binom{2n}{n}}^{-1}{\binom{n}{h-1}}^{-1}\le \dfrac{n+1}{2^n}\enspace,
%{\binom{2(n-h)}{n-h}}^{-1}\le \dfrac{2(n-h)}{2^{n-h}}
$$
%(see Proof of {\cite[Theorem $1$, p.~$376$]{M2}}),
we conclude
\begin{align} \label{i'>i}
\max_{\substack{x\in \C \\ |x-\omega_i-h|=R/2}} \left|\dfrac{1}{\prod_{h'=0}^{n+1}(x-\omega_{i'}-h')^{r_{i'}}}\right|\le  \left(\dfrac{(n+1)^2}{C_1}\right)^{r_{i'}} \cdot \left(\dfrac{2^n}{(n+1)!}\right)^{r_{i'}}\enspace.
\end{align}

\

\textbf{Case $\rm{I\hspace{-.15em}I\hspace{-.15em}I}$: $i'<i$.}

By the similar arguments in the case of $i'>i$, we obtain
$$
\max_{\substack{x\in \C \\ |x-\omega_i-h|=R/2}} \left|\dfrac{1}{\prod_{h'=0}^{n+1}(x-\omega_{i'}-h')^{r_{i'}}}\right|
\le 
\left(\dfrac{1}{C_2}\right)^{r_{i'}}\cdot  \left(\dfrac{4^n(n+1-h)!(h-1)!)}{(2(n+1-h))!(2h-2)!}\right)^{r_{i'}}
$$ where  $C_2=\min\{|\omega_i-\omega_{i'}-R/2|,|1+\omega_{i'}-\omega_{i}-R/2|\}$. 
Therefore,
\begin{align*} 
\max_{\substack{x\in \C \\ |x-\omega_i-h|=R/2}} \left|\dfrac{1}{\prod_{h'=0}^{n+1}(x-\omega_{i'}-h')^{r_{i'}}}\right|\le   \left(\dfrac{(n+1)^2}{C_2}\right)^{r_{i'}} \cdot \left(\dfrac{2^n}{(n+1)!}\right)^{r_{i'}} \enspace.
\end{align*}
Using Equations $(\ref{i'=i})$ and $(\ref{i'>i})$ and applying the above inequality to Equation $(\ref{int rep a})$, we obtain the desire inequality.
This completes the proof of Lemma $\ref{absolute value a}$.
\end{proof}

\begin{proposition} \label{absolute value P}
Let $v$ be a place of $K$ and $\alpha\in K\setminus\{0\}$. Put $\varepsilon_v=1$ if $v\in \mathfrak{M}^{\infty}_K$ and $\varepsilon_v=0$ if $v\in\mathfrak{M}^f_{K}$
and
\begin{align} \label{E_n}
D_{n+1}=(r-1)!(n+2)!^{r_1+\cdots+r_m}\left(\left[\prod_{1\le i_1<i_2\le m} D_{n+2}(\omega_{i_2}-\omega_{i_1})^2d_{n+2}(\omega_{i_2}-\omega_{i_1})\right]d_{n+1}\right)^{r}\enspace.
\end{align}
Then
{\small{\begin{align*}
\max_{\substack{1\le k \le r_1+\cdots+r_m \\ 1 \le i \le m, 0\le j_i \le r_i-1}}\log |D_{n+1}\alpha^{n+1}P_{k,i,j_i}(\alpha^{-1})|_v&\le (n+1){\rm{h}}_v(\alpha)+\varepsilon_vo(n)+
\dfrac{\varepsilon_v(r_1+\cdots+r_m+1)[K_v:\R]n}{[K:\Q]}\log \left(2\right)\\
&+\varepsilon_v r \log\left|\left[\prod_{1\le i_1<i_2\le m} D_{n+2}(\omega_{i_2}-\omega_{i_1})^2d_{n+2}(\omega_{i_2}-\omega_{i_1})\right]d_{n+1}\right|_v \enspace.
\end{align*}}}
\end{proposition}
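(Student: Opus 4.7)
The plan is to decompose the proof according to the nature of the place $v$, using the explicit formula
\[
\alpha^{n+1}P_{k,i,j_i}(\alpha^{-1}) = \sum_{h=0}^{n+1}\frac{p_{k,h,i,j_i+1}(\boldsymbol{\omega})}{j_i!}\,\alpha^{n+1-h}(\alpha+1)^h
\]
together with the two ingredients already at hand: the denominator lemma (Lemma~$\ref{denominator}$), which shows that $D_{n+1}p_{k,h,i,j_i+1}(\boldsymbol{\omega})/j_i!$ is a rational integer for every $h$ (using $(r-1)!/j_i!\in\Z$ for $0\le j_i\le r_i-1$), and the archimedean size estimate (Lemma~$\ref{absolute value a}$).

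If $v$ is a finite place ($\varepsilon_v=0$), I would apply the ultrametric inequality: the integer coefficients satisfy $|D_{n+1}p_{k,h,i,j_i+1}/j_i!|_v\le 1$, and the estimate $|\alpha+1|_v\le\max\{1,|\alpha|_v\}$ yields $|\alpha^{n+1-h}(\alpha+1)^h|_v\le\max\{1,|\alpha|_v\}^{n+1}$. Taking the maximum over $h$ produces $(n+1)\,\mathrm{h}_v(\alpha)$, while the $\varepsilon_v$-prefactored terms vanish, as the statement predicts.

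If $v$ is archimedean ($\varepsilon_v=1$), write $s_v:=[K_v:\R]/[K:\Q]$, so that $|\cdot|_v=|\iota_v\cdot|^{s_v}$. The classical triangle inequality in $\C$ gives
\[
|\iota_v(\alpha^{n+1}P_{k,i,j_i}(\alpha^{-1}))| \le (n+2)\,\max_h\frac{|\iota_v p_{k,h,i,j_i+1}|}{j_i!}\,\bigl(2\max\{1,|\iota_v\alpha|\}\bigr)^{n+1},
\]
using the crude bound $|\iota_v\alpha|^{n+1-h}|\iota_v\alpha+1|^h\le(2\max\{1,|\iota_v\alpha|\})^{n+1}$. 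Substituting Lemma~$\ref{absolute value a}$, which gives $\max_h|\iota_v p_{k,h,i,j_i+1}|\le (2^n/(n+1)!)^{r_1+\cdots+r_m}e^{o(n)}$, and raising the resulting inequality to the $s_v$-th power yields the archimedean bound on $|\alpha^{n+1}P_{k,i,j_i}(\alpha^{-1})|_v$. Multiplying by $|D_{n+1}|_v=|D_{n+1}|^{s_v}$, the crucial cancellation is that the $(n+2)!^{r_1+\cdots+r_m}$ factor inside $D_{n+1}$ reduces against the $(n+1)!^{r_1+\cdots+r_m}$ denominator from Lemma~$\ref{absolute value a}$ down to $(n+2)^{r_1+\cdots+r_m}=e^{o(n)}$; the two sources of powers of $2$ combine to $2^{n(r_1+\cdots+r_m+1)}$ up to a bounded factor; and the remaining arithmetic product gives exactly the $r\log|\cdot|_v$ term in the statement.

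The main obstacle is bookkeeping: every stray polynomial-in-$n$ or $O(1)$ factor --- the $n+2$ summands, the $(r-1)!/j_i!$ correction, the passage from $|\iota_v\alpha|+1$ to $2\max\{1,|\iota_v\alpha|\}$, the ratio $(n+2)!/(n+1)!=n+2$, and the $e^{o(n)}$ hidden in Lemma~$\ref{absolute value a}$ --- must be verified to contribute at most $\varepsilon_v\,o(n)$ after the $s_v$-th power is taken, so that only the three main terms $(n+1)\mathrm{h}_v(\alpha)$, $n(r_1+\cdots+r_m+1)s_v\log 2$ and $r\log|E|_v$ survive, where $E=\prod_{i_1<i_2}D_{n+2}(\omega_{i_2}-\omega_{i_1})^2 d_{n+2}(\omega_{i_2}-\omega_{i_1})\cdot d_{n+1}$.
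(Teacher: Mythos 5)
Your proposal is correct and follows essentially the same route as the paper: the finite places are handled by the integrality from Lemma~\ref{denominator} together with the ultrametric inequality, and the archimedean places by Lemma~\ref{absolute value a} plus the triangle inequality, with the extra factor $2^{n+1}$ accounting for the additional $n\log 2$ in the $(r_1+\cdots+r_m+1)$ coefficient. The only cosmetic difference is that you bound $|\alpha+1|\le 2\max\{1,|\alpha|\}$ directly in the basis $(1+z)^h$, whereas the paper first re-expands $P_{k,i,j_i}$ in powers of $z$ and absorbs the same factor through the binomial sums $\sum_{h\ge\lambda}\binom{h}{\lambda}\le 2^{n+2}$; both yield identical main terms up to $\varepsilon_v\,o(n)$.
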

\begin{proof}
First, we consider the case $v_0\in\mathfrak{M}^{f}_K$. According to Lemma $\ref{denominator}$, $D_{n+1}P_{k,i,j_i}(z)\in \Z[z]$ with a degree at most $n+1$.
Thus, the strong triangle inequality enable us to obtain the assertion.

We consider the case $v_0\in\mathfrak{M}^{\infty}_K$.
By the definition of $P_{k,i,j_i}(z)$,
\begin{align}\label{P rep}
D_{n+1}\alpha^{n+1}P_{k,i,j_i}(\alpha^{-1})=D_{n+1}\sum_{\lambda=0}^{n+1}\left(\sum_{h=\lambda}^{n+1}\dfrac{p_{k,h,i,j_i}(\boldsymbol{\omega})}{j_i!}\binom{h}{\lambda}\right)\alpha^{n+1-\lambda}\enspace.
\end{align}
By Lemma $\ref{absolute value a}$ and the triangle inequality, 
$$\max_{0\le \lambda\le n+1} \left|\sum_{h=\lambda}^{n+1}\dfrac{p_{k,h,i,j_i}(\boldsymbol{\omega})}{j_i!}\binom{h}{\lambda}\right|\le 2^{n+1}\cdot \left(\dfrac{2^n}{(n+1)!}\right)^{r_1+\cdots+r_m}\cdot e^{o(n)} \ \ 
(n\to\infty)\enspace.$$
Combining the above inequality with the triangle inequality again for Equation $(\ref{P rep})$ yields the desire inequality. 
This completes the proof of Proposition $\ref{absolute value P}$.
\end{proof}
We conclude this section by determining an upper bound for $R_k(z)$.
\begin{lemma} \label{reminder term}
Let $v_0$ be a place of $K$ and $\alpha\in K$ with 
\begin{align} \label{cond a}
|\alpha|_{v_0}>
\begin{cases} 
1 \ & \ \ \text{if} \ v_0\in \mathfrak{M}^{\infty}_K \ \text{or} \ v_0\in \mathfrak{M}^{f}_K \ \text{and} \ \  p_{v_0}\nmid {\rm{den}}(\omega_1,\ldots,\omega_m)\enspace,\\ 
|p_{v_0}|^{-p_{v_0}/(p_{v_0}-1)}_{v_0} \ & \ \ \text{otherwise}\enspace.
\end{cases}
\end{align}
Let $D_{n+1}$ be the integer defined in $(\ref{E_n})$.
Then
{\small{\begin{align*}
\max_{\substack{1\le k \le r_1+\cdots+r_m}}\log|D_{n+1}\alpha^{n+1}R_k(\alpha^{-1})|_{v_0}&\le -(n+1)(r_1+\cdots+r_m-1){\rm{h}}_{v_0}(\alpha)+{o(n)}\\
&+\dfrac{\varepsilon_v(r_1+\cdots+r_m+1)[K_v:\R]n}{[K:\Q]}\log \left(2\right)\\
&+\varepsilon_v r \log\left|\left[\prod_{1\le i_1<i_2\le m} D_{n+2}(\omega_{i_2}-\omega_{i_1})^2d_{n+2}(\omega_{i_2}-\omega_{i_1})\right]d_{n+1}\right|_{v_0}\\
&+(\varepsilon_v-1)
\begin{cases} 
0 & \ \text{if} \ p_{v_0}\nmid {\rm{den}}(\omega_i)_{1\le i \le m}\\
\dfrac{(n+1)(r_1+\cdots+r_m)p_{v_0}}{p_{v_0}-1}\log|p_{v_0}|_{v_0} & \ \text{if} \ p_{v_0}\mid {\rm{den}}(\omega_i)_{1\le i \le m}\enspace.
\end{cases}
\end{align*}}}
\end{lemma}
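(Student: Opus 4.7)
The plan is to combine the order of vanishing of $R_k(z)$ at $z=0$ (coming from the Padé approximation property) with explicit bounds on the Taylor coefficients of the exponential remainder $\mathfrak{R}_k(w)$, and then to transfer through the $K$-isomorphism $\mathcal{T}$.

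First I invoke Corollary~\ref{main pade} together with the normality provided by Proposition~\ref{diagonal normality log} to deduce that $R_k(z)$ vanishes at $z=0$ to order exactly $L_0 := (n+1)(r_1+\cdots+r_m)+k-1$, and a fortiori to order at least $(n+1)(r_1+\cdots+r_m)$. Since $R_k(z) = \mathcal{T}(\mathfrak{R}_k)(z) = \mathfrak{R}_k(\log(1+z))$ and $\mathcal{T}$ preserves order by \eqref{preserve}, the exponential remainder $\mathfrak{R}_k(w)$ vanishes to the same order at $w=0$. Setting $w := \log(1+\alpha^{-1})$, the identity $R_k(\alpha^{-1}) = \mathfrak{R}_k(w)$ reduces the task to estimating $\mathfrak{R}_k$ at an argument that is small in the $v_0$-adic norm under the hypothesis on $|\alpha|_{v_0}$, and then multiplying by $\alpha^{n+1}D_{n+1}$.

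The next step is to expand $\mathfrak{R}_k(w) = \sum_{s \ge L_0} a_s\, w^s$ with
$$a_s = \sum_{i,h,j_i} \frac{p_{k,h,i,j_i+1}(\boldsymbol{\omega})\,(\omega_i+h)^{s-j_i}}{j_i!\,(s-j_i)!},$$
which follows directly from Proposition~\ref{Pade e}. Bounding $|a_s|_{v_0}$ uses Lemma~\ref{absolute value a} for $|p_{k,h,i,j_i+1}|_{v_0}$ together with the elementary estimate $|\omega_i+h|_{v_0} \le (n+2)^{[K_{v_0}:\R]/[K:\Q]}$ in the archimedean case. Summing over $s \ge L_0$ (where the leading term dominates) and combining with $|w|_{v_0} \le 2|\alpha|_{v_0}^{-1}$ yields $|\alpha^{n+1}R_k(\alpha^{-1})|_{v_0} \le |\alpha|_{v_0}^{n+1-L_0} \cdot (\text{mild factor})$. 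Since $n+1-L_0 = -(n+1)(r_1+\cdots+r_m-1)-k+1$, this produces the leading decay $-(n+1)(r_1+\cdots+r_m-1){\rm h}_{v_0}(\alpha)$. The integer $D_{n+1}$ is designed by Lemma~\ref{denominator} precisely to clear the denominators of the Taylor coefficients involved; its $v_0$-adic size accounts for the $r\log|\cdots|_{v_0}$ term in the archimedean case, and is absorbed automatically through the strong triangle inequality in the non-archimedean case.

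The main obstacle is the non-archimedean case when $p_{v_0} \mid \mathrm{den}(\omega_i)$ for some $i$. Here the binomial expansion of $(1+\alpha^{-1})^{\omega_i+h}$ involves coefficients $\binom{\omega_i+h}{s}$ whose $v_0$-adic denominators can be as large as $|p_{v_0}|_{v_0}^{-sp_{v_0}/(p_{v_0}-1)}$. This is exactly what forces the stronger lower bound $|\alpha|_{v_0} > |p_{v_0}|_{v_0}^{-p_{v_0}/(p_{v_0}-1)}$ so that the series at $z = \alpha^{-1}$ still converges, and it is the origin of the correction term $-\frac{(n+1)(r_1+\cdots+r_m)p_{v_0}}{p_{v_0}-1}\log|p_{v_0}|_{v_0}$ in the statement. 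For this portion I would work term-by-term with the strong triangle inequality, carefully tracking how the $p_{v_0}$-adic denominators of the binomial coefficients interact with the large $v_0$-adic size of $\alpha$; this bookkeeping, rather than any conceptual novelty, is where the weight of the proof lies.
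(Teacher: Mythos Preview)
Your route through the exponential remainder $\mathfrak{R}_k(w)$ evaluated at $w=\log(1+\alpha^{-1})$ is genuinely different from the paper's argument, which never leaves the $z$-variable: the paper writes $D_{n+1}R_k(z)=\sum_{\lambda\ge L_0} r_{k,\lambda}z^{\lambda}$, bounds $|r_{k,\lambda}|_{v_0}$ directly by successively estimating the coefficients of $\log^{j}(1+z)$, of $(1+z)^{\omega}\log^{j}(1+z)$, and of $D_{n+1}P_{k,i,j_i}(z)(1+z)^{\omega_i}\log^{j_i}(1+z)$, and then sums the series at $z=\alpha^{-1}$. That approach never needs to control $|\log(1+\alpha^{-1})|_{v_0}$.

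Your sketch has a real gap in the archimedean step. The inequality $|w|_{v_0}\le 2|\alpha|_{v_0}^{-1}$ is false in general under the sole hypothesis $|\alpha|_{v_0}>1$: take $\alpha$ real with $\alpha^{-1}$ close to $-1$, e.g.\ $\alpha^{-1}=-0.99$, so that $|w|=|\log(0.01)|\approx 4.6$ while $2|\alpha|^{-1}\approx 1.98$. Since the bound you extract from $\sum_{s\ge L_0}a_s w^{s}$ carries a factor $|w|^{L_0}$ with $L_0\sim (n+1)(r_1+\cdots+r_m)$, an uncontrolled $|w|$ destroys the leading term $-(n+1)(r_1+\cdots+r_m-1)\mathrm{h}_{v_0}(\alpha)$ entirely; this cannot be hidden in an $o(n)$. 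Even when $|w|\sim|\alpha|^{-1}$ (large $|\alpha|$), a Stirling computation of $((n+2)|w|)^{L_0}/L_0!$ produces an extra factor of order $(e/R)^{(n+1)R}$ with $R=r_1+\cdots+r_m$; for $R=1$ this exceeds the constant $2^{(R+1)n}$ in the lemma, so the stated bound is not recovered. The paper's direct coefficient estimate for $r_{k,\lambda}$ avoids both issues because the combinatorics of $\log^{j}(1+z)$ and $\binom{\omega}{\lambda}$ are handled explicitly rather than being repackaged as a single evaluation of $\mathfrak{R}_k$.

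For the non-archimedean case you correctly identify that the $p_{v_0}$-adic denominators of $\binom{\omega_i}{\lambda}$ are the source of the correction term, but note that this already commits you to the $z$-expansion (these binomial coefficients do not appear in your $a_s$), so you are tacitly switching to the paper's method there without carrying it out. The actual bookkeeping uses the bound $\mu_{\lambda}(\omega)\binom{\omega}{\lambda_1}\in\mathbb{Z}$ with $\mu_{\lambda}(\omega)=\prod_{q\mid\mathrm{den}(\omega)}q^{\lambda+\lfloor\lambda/(q-1)\rfloor}$, together with the fact that $D_{n+1}P_{k,i,j_i}(z)\in\mathbb{Z}[z]$, to get $|r_{k,\lambda}|_{v_0}\le \max_i|\mu_{\lambda}(\omega_i)|_{v_0}^{-1}\cdot\lambda^{O(1)}$; the stronger hypothesis on $|\alpha|_{v_0}$ is then exactly what makes $\sup_{\lambda\ge L_0}|r_{k,\lambda}\alpha^{-\lambda}|_{v_0}$ attained near $\lambda=L_0$.
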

\begin{proof}
We develop $D_{n+1}R_k(z)$ into a power series 
$$D_{n+1}R_k(z)=\sum_{\lambda=k+(n+1)(r_1+\cdots+r_m)-1}^{\infty}r_{k,\lambda}z^{\lambda}\enspace.$$
Notice, by assumption $(\ref{cond a})$ of $\alpha$, $R_k(\alpha^{-1})$ converges in $K_{v_0}$.
First we estimate $|r_{k,\lambda}|_{v_0}$ step by step.

\

Let $j$ be a non-negative integer. Put $\log^j(1+z)=\sum_{\lambda=0}^{\infty}a_{j,\lambda}z^{\lambda}$. Then $a_{j,\lambda}=0$ if $\lambda< j$ and 
$$a_{j,\lambda}=(-1)^{\lambda j}\sum_{\substack{1\le \lambda_1,\ldots.\lambda_j\le \lambda\\ \lambda_1+\ldots+\lambda_j=\lambda}}\dfrac{1}{\lambda_1\cdots \lambda_j}\in \Q \ \ \text{for} \ \lambda\ge j \enspace.$$
The following inequalities 
\begin{align*}
&{\mathrm{Card}}\{(\lambda_1,\ldots,\lambda_j)\in \N^j\mid \lambda_1+\cdots+\lambda_j=\lambda\}=\binom{\lambda-1}{j-1}  \ \ \text{for} \ \lambda\ge j\enspace,\\
&\left|\dfrac{1}{\lambda}\right|_{v_0}\le \lambda^{\tfrac{[K_{v_0}:\Q_{p_{v_0}}]}{[K:\Q]}}  \ \text{if} \ v_0\in \mathfrak{M}^{f}_K\enspace,
\end{align*}
allow us to obtain
\begin{align} \label{ajl}
|a_{j,\lambda}|_{v_0}\le 
\begin{cases}
{\binom{\lambda-1}{j-1}}^{\tfrac{[K_{v_0}:\R]}{[K:\Q]}} & \ \text{for} \ v_0\in \mathfrak{M}^{\infty}_K\enspace,\\
\lambda^{\tfrac{j[K_{v_0}:\Q_{p_{v_0}}]}{[K:\Q]}} & \ \text{for} \ v_0\in \mathfrak{M}^{f}_K\enspace,
\end{cases}
\end{align}
for $\lambda\ge 1$.
Let $\omega\in \Q\cap [0,1)$. Put $(1+z)^{\omega}\log^j(1+z)=\sum_{\lambda=j}^{\infty}b_{j,\lambda}(\omega)z^{\lambda}$. Then
\begin{align} \label{bjl}
b_{j,\lambda}(\omega)=\sum_{\substack{0\le \lambda_1,\lambda_2\le \lambda\\ \lambda_1+\lambda_2=\lambda}}\binom{\omega}{\lambda_1}a_{j,\lambda_2}\enspace.
\end{align}
We estimate $|b_{j,\lambda}(\omega)|_{v_0}$. Since $\omega\in [0,1)$, we have $$\left|\binom{\omega}{\lambda}\right|\le 1 \ \ \text{for} \ \ \lambda\ge 0\enspace.$$
Put $\mu_{\lambda}(\omega)=\prod_{q \, \text{prime}, q|{\rm{den}}(\omega)}q^{\lambda+\lfloor \lambda/(q-1)\rfloor}$. 
By \cite[Lemma $2.2$]{Beu}, 
$$\mu_{\lambda}(\omega)\binom{\omega}{\lambda_1}\in \Z \ \ \text{for} \ \ 0\le \lambda_1\le \lambda\enspace.$$
Using Equation $(\ref{ajl})$ and the above relations for Equation $(\ref{bjl})$, 
\begin{align} \label{bjl2}
|b_{j,\lambda}(\omega)|_{v_0}\le \begin{cases}
(\lambda-j+1){\binom{\lambda-1}{j-1}}^{\tfrac{[K_{v_0}:\R]}{[K:\Q]}} & \ \ \text{if} \ \ v_0\in \mathfrak{M}^{\infty}_K\enspace,\\
|\mu_{\lambda}(\omega)|^{-1}_{v_0}\lambda^{\tfrac{j[K_{v_0}:\Q_{p_{v_0}}]}{[K:\Q]}}  & \ \ \text{if} \ \ v_0\in \mathfrak{M}^{f}_K\enspace.
\end{cases}
\end{align}
Put $D_{n+1}P_{k,i,j_i}(z)(1+z)^{\omega_i}\log^{j_i}(1+z)=\sum_{\lambda=0}^{\infty}c_{k,i,j_i,\lambda}z^{\lambda}$. We estimate $|c_{k,i,j_i,\lambda}|_{v_0}$.

\smallskip

In the case of $v_0\in \mathfrak{M}^{\infty}_K$, Equation $(\ref{bjl2})$, Lemma $\ref{absolute value a}$ and triangle inequality yield
\begin{align} \label{cjl inf}
|c_{k,i,j_i,\lambda}|_{v_0}\le e^{o(n)}\cdot \left|2^{n+1}\right|_{v_0} \cdot \left|\dfrac{2^nD_{n+1}}{(n+1)!}\right|^{r_1+\cdots+r_m}_{v_0}\cdot \lambda^{(r-1)\tfrac{[K_{v_0}:\R]}{[K:\Q]}+1} \enspace.
\end{align}

\smallskip

In the case $v_0\in \mathfrak{M}^{f}_K$, since we have $D_{n+1}P_{k,i,j_i}(z)\in \Z[z]$ by Lemma $\ref{denominator}$, combining Equation $(\ref{bjl2})$ and the strong triangle inequality yields
\begin{align} \label{cjl f}
|c_{k,i,j_i,\lambda}|_{v_0}\le
|\mu_{\lambda}(\omega_i)|^{-1}_{v_0}\lambda^{\tfrac{(r-1)[K_{v_0}:\Q_{p_{v_0}}]}{[K:\Q]}}\enspace.
\end{align}
Finally, the identity $r_{k,\lambda}=\sum_{i=1}^m\sum_{j_i=0}^{r_i-1}c_{k,i,j_i,\lambda}$,
inequalities $(\ref{cjl inf})$ and $(\ref{cjl f})$ yield
\begin{align} \label{rkl}
|r_{k,\lambda}|_{v_0}\le 
\begin{cases}
e^{o(n)}\cdot \left|2^{n+1}\right|_{v_0}\cdot \left|\dfrac{2^nD_{n+1}}{(n+1)!}\right|^{r_1+\cdots +r_m}_{v_0}\cdot \lambda^{\tfrac{(r-1)[K_{v_0}:\R]}{[K:\Q]}+1} & \ \ \text{if} \ \ v_0\in \mathfrak{M}^{\infty}_K\\
\max_{1\le i \le m} (|\mu_{\lambda}(\omega_i)|^{-1}_{v_0})\lambda^{\tfrac{(r-1)[K_{v_0}:\Q_{p_{v_0}}]}{[K:\Q]}} & \ \ \text{if} \ \ v_0\in \mathfrak{M}^{f}_K\enspace.
\end{cases}
\end{align}
Using this inequality, we finish the proof of this proposition.

\
 
In the case of $v_0\in \mathfrak{M}^{\infty}_K$, Equation $(\ref{rkl})$ leads us to get
\begin{align*}
|D_{n+1}R_k(\alpha^{-1})|_{v_0}&\le e^{o(n)}\cdot \left|2^{n+1}\right|_{v_0}\cdot \left|\dfrac{2^nD_{n+1}}{(n+1)!}\right|^{r_1+\cdots +r_m}_{v_0}\cdot |\alpha^{-1}|^{(n+1)(r_1+\cdots r_m)}_{v_0}\cdot \\
&\sum_{\lambda=(n+1)(r_1+\cdots +r_m)+k-1}\lambda^{\tfrac{(r-1)[K_{v_0}:\R]}{[K:\Q]}+1}  \cdot |\alpha^{-1}|^{\lambda-(n+1)(r_1+\cdots +r_m)-k+1}_{v_0}\enspace.
\end{align*}
Here, since $|\alpha^{-1}|_{v_0}<1$, 
$$\sum_{\lambda=(n+1)(r_1+\cdots +r_m)+k-1}\lambda^{\tfrac{(r-1)[K_{v_0}:\R]}{[K:\Q]}+1}  \cdot |\alpha^{-1}|^{\lambda-(n+1)(r_1+\cdots +r_m)-k+1}_{v_0}=e^{o(n)}\enspace.$$
Combining the above inequalities allows us to obtain the desire estimate.

\

Next we consider the case of $v_0\in \mathfrak{M}^{f}_K$.
The strong triangle inequality leads us to obtain $$|D_{n+1}R_k(\alpha^{-1})|_{v_0}\le \max_{\lambda\ge k+(n+1)(r_1+\cdots+r_m)-1}(|r_{k,\lambda}\alpha^{-\lambda}|_{v_0})\enspace.$$
Using $$\max_{1\le i \le m} (|\mu_{\lambda}(\omega_i)|^{-1}_{v_0})\le 
\begin{cases} 
1 & \ \ \text{if} \ p_{v_0}\nmid {\rm{den}}(\omega_1,\ldots,\omega_m)\\
|p_{v_0}|^{-\tfrac{\lambda p_{v_0}} {p_{v_0}-1}}_{v_0}  & \ \ \text{if} \ p_{v_0}\mid {\rm{den}}(\omega_1,\ldots,\omega_m)\enspace,
\end{cases}
$$
By Equation $(\ref{rkl})$ and assumption $(\ref{cond a})$,
\begin{align*}
\max_{\lambda\ge k+(n+1)(r_1+\cdots+r_m)-1}(|r_{k,\lambda}\alpha^{-\lambda}|_{v_0}) & \le e^{o(n)}|\alpha^{-1}|^{(n+1)(r_1+\cdots+r_m)}_{v_0}\\
&\cdot \begin{cases} 
1 & \ \ \text{if} \ p_{v_0}\nmid {\rm{den}}(\omega_1,\ldots,\omega_m)\\
|p_{v_0}|^{-\tfrac{(r_1+\cdots+r_m)(n+1) p_{v_0}} {p_{v_0}-1}}_{v_0} & \ \ \text{if} \ p_{v_0}\mid {\rm{den}}(\omega_1,\ldots,\omega_m)\enspace,
\end{cases}
\end{align*}
holds. This completes the proof of Proposition $\ref{reminder term}$.
\end{proof}

\section{Proof of main theorem}
In this section, we conclude the proof of Theorem $\ref{power of log indep}$. 
Let us describe a sufficient condition for the non-vanishing of the determinant of matrices whose entries are consists of Pad\'{e} approximants for a given family of power series. 
\subsection{Non-vanishing of certain determinants}
\begin{lemma}  $(${\it confer}  {\rm{\cite[Theorem $1.2.3$]{J}}} $)$ \label{non vanishing of det}
Let $K$ be a field of characteristic $0$, $m$ an integer with $2\le m$ and $\boldsymbol{f}=(f_1,\ldots,f_{m})\in K[[z]]^m$ with $f_1(0)\neq 0$.
Let $\boldsymbol{n}=(n_1,\ldots,n_m)\in \Z^{m}_{\ge 0}$. Put $\boldsymbol{n}_k=(n_1+1,n_2+1,\ldots,n_k+1,n_{k+1},\ldots,n_m)$ for $1\le k \le m.$
Let $(P_{k,1}(z),\ldots,P_{k,m}(z))\in K[z]^m$ be a weight $\boldsymbol{n}_k$ Pad\'{e} approximant of $\boldsymbol{f}$. We define a polynomial $\Delta(z)$ by 
\begin{equation} \label{det 1}
                     \Delta(z)={\rm{det}}{\begin{pmatrix}
                     P_{1,1}(z)& P_{1,2}(z) & \dots &P_{1,m}(z)\\
                     P_{2,1}(z)& P_{2,2}(z) & \dots &P_{2,m}(z)\\
                     \vdots & \vdots & \ddots  &\vdots \\
                     P_{m,1}(z)& P_{m,2}(z) & \dots &P_{m,m}(z)\\
                     \end{pmatrix}}\enspace. 
                     \end{equation}
Then there exists an element $\gamma\in K$ satisfying 
\begin{align*} %\label{calculation of det}
\Delta(z)=\gamma z^N,
\end{align*} 
where $N={\sum_{j=1}^m} (n_j+1)$.
Moreover, if the set of indices $\{\boldsymbol{n}\}\cup \{\boldsymbol{n}_k\}_{1\le k \le m-1}$ are normal with respect to $\boldsymbol{f}$, $\Delta(z)\neq 0$, i.e. $\gamma\neq 0$.
\end{lemma}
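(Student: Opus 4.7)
The plan is to establish two things: that $\Delta(z)$ has degree at most $N$ and order at least $N$, forcing $\Delta(z) = \gamma z^N$ for some $\gamma \in K$; and that, under the normality hypothesis, $\gamma$ coincides with the product of the leading coefficients of the diagonal entries $P_{k,k}(z)$, each of which is nonzero by Lemma \ref{cor fund Pade}.

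For the degree bound I would use the permutation expansion $\Delta(z) = \sum_{\sigma \in S_m} \mathrm{sgn}(\sigma) \prod_{k=1}^m P_{k,\sigma(k)}(z)$. Since the $k$-th row has weight $\boldsymbol{n}_k$, the polynomial $P_{k,i}(z)$ has degree at most $n_i + 1$ if $i \leq k$ and at most $n_i$ if $i > k$. Hence, writing $\#\{k : \sigma(k) \leq k\}$ for the cardinality of that set,
$$\deg\prod_{k=1}^m P_{k,\sigma(k)}(z) \leq \sum_{k=1}^m n_{\sigma(k)} + \#\{k : \sigma(k) \leq k\} = \sum_{k=1}^m n_k + \#\{k : \sigma(k) \leq k\} \leq N,$$
with equality only when $\sigma(k) \leq k$ for every $k$, i.e. $\sigma = \mathrm{id}$. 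For the order bound I would apply the classical column operation: form $\tilde M(z)$ from $M(z) := (P_{k,i}(z))_{k,i}$ by replacing its first column with $\sum_{i=1}^m f_i(z)\,\cdot\,(\text{$i$-th column of }M(z))$. Then the first column of $\tilde M(z)$ consists of the remainders $R_k(z) := \sum_{i=1}^m P_{k,i}(z) f_i(z)$, while the other columns are unchanged, and the operation multiplies the determinant by $f_1(z)$, so $\det \tilde M(z) = f_1(z)\, \Delta(z)$. Since $\mathrm{ord}\, R_k(z) \geq (N + k) - 1 \geq N$ for every $k$, cofactor expansion along the first column yields $\mathrm{ord}\, \det \tilde M(z) \geq N$, and the hypothesis $f_1(0) \neq 0$ then forces $\mathrm{ord}\, \Delta(z) \geq N$. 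Combined with the degree bound, this gives $\Delta(z) = \gamma z^N$.

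For the non-vanishing of $\gamma$, the preceding degree analysis shows that only $\sigma = \mathrm{id}$ contributes to the coefficient of $z^N$, so
$$\gamma = \prod_{k=1}^m (\text{coefficient of }z^{n_k+1}\text{ in }P_{k,k}(z)).$$
Setting $\boldsymbol{n}_0 := \boldsymbol{n}$, the weight $\boldsymbol{n}_k$ is obtained from $\boldsymbol{n}_{k-1}$ by raising only the $k$-th coordinate by one (which equals $n_k$ in $\boldsymbol{n}_{k-1}$ and $n_k+1$ in $\boldsymbol{n}_k$). Since $\boldsymbol{n}_{k-1}$ is normal with respect to $\boldsymbol{f}$ for each $k = 1, \ldots, m$ by hypothesis, Lemma \ref{cor fund Pade} applies and yields $\deg P_{k,k}(z) = n_k + 1$, so each of the above leading coefficients is nonzero. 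Hence $\gamma \neq 0$.

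The main technical observation is the combinatorial fact that only the identity permutation contributes to the top-degree coefficient; this is what allows the seemingly global non-vanishing of the determinant to be reduced to a term-by-term application of Lemma \ref{cor fund Pade}, and the chain of $m$ normality assumptions is precisely tailored to make this reduction go through.
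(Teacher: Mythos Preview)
Your proof is correct and follows essentially the same approach as the paper: both use the column operation with $f_1$ to get $\mathrm{ord}\,\Delta(z)\ge N$, bound $\deg\Delta(z)\le N$ via the permutation expansion, and invoke Lemma~\ref{cor fund Pade} (applied to the chain $\boldsymbol{n}_{k-1}\to\boldsymbol{n}_k$) to force $\deg P_{k,k}=n_k+1$. Your identification of $\gamma$ as $\prod_{k=1}^m[\,z^{n_k+1}\,]P_{k,k}(z)$, with only $\sigma=\mathrm{id}$ contributing, is in fact a crisper way to extract the non-vanishing than the paper's cofactor discussion.
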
 
\begin{proof}
Denote the formal power series ${\sum_{j=1}^m}P_{k,j}(z)f_j(z)$ by $R_k(z)$ for $1\le k \le m$. Then
\begin{align} \label{order lower bound}
{\rm{ord}}\, R_k(z)\ge N+k-1 \ \ \text{for} \ \ 1\le k \le m\enspace.
\end{align}
By adding the $j$-th column of the matrix in $(\ref{det 1})$ multiplied by $f_{j}(z)$ to the first column of the matrix for all $1\le k \le m$, we obtain 
$$
\Delta(z)=\dfrac{1}{f_1(z)}
                     \mathrm{det}{\begin{pmatrix}
                     R_1(z)& P_{1,2}(z) &\dots & P_{1,m}(z)\\
                     R_2(z)& P_{2,2}(z) & \dots & P_{2,m}(z)\\
                     \vdots & \vdots & \ddots & \vdots\\
                     R_m(z)& P_{m,2}(z) & \dots & P_{m,m}(z)\\
                     \end{pmatrix}}\enspace.                    
                     $$ 
For $1\le t,u\le m$, we denote the $(t,u)$-th cofactor of the above matrix by $\Delta_{t,u}(z)$. Then
\begin{align*} 
\Delta(z)=\dfrac{1}{f_1(z)}\sum_{t=1}^{m}R_{t}(z)\Delta_{t,1}(z)\enspace. 
\end{align*} 
Using inequalities $(\ref{order lower bound})$ and the above equality,
\begin{align*} 
{\rm{ord}}\, \Delta(z)\ge N\enspace.
\end{align*}
However, by the definition of  $\Delta(z)$, 
$$
{\rm{deg}}\, \Delta(z)\le N\enspace.
$$
These inequalities imply that the determinant $\Delta(z)$ is necessarily divisible by $z^{N}$.
Finally, assume the set of indices $\{\boldsymbol{n}\}\cup \{\boldsymbol{n}_k\}_{1\le k \le m-1}$ is normal with respect to $\boldsymbol{f}$. 
Thanks to Lemma $\ref{cor fund Pade}$, ${\rm{deg}}\,P_{k,k}(z)=n_k+1$ for $1\le k \le m$, and therefore, we conclude ${\rm{deg}}\, \Delta_{1,1}(z)=N$.
The inequalities 
$${\rm{ord}}\, R_t(z)=N+t-1, \ \ \ {\rm{deg}}\,\Delta_{t,1}(z)\le N \ \ \ \text{for} \ \ \ 1\le t \le m$$ 
enable us to obtain ${\rm{deg}}\, \Delta(z)=N$, in particular, $\Delta(z)\neq 0$.
%This completes the proof of Lemma $\ref{non vanishing of det}$.
\end{proof}
Using Lemma $\ref{non vanishing of det}$ for $\boldsymbol{f}=((1+z)^{\omega_i}{\rm{log}}^{j_i}(1+z))_{{1\le i \le m, 0 \le j_i \le r_i-1}}$ and $\boldsymbol{n}=(n,\ldots,n)\in \N^{r_1+\cdots+r_m}$, we obtain the following corollary.
\begin{corollary} \label{key corollary}
Let $(P_{k,i,j_i}(z))_{{1\le i \le m, 0\le j_i \le r_i-1}}$ be the vector of polynomials defined in Corollary $\ref{main pade}$ for $1\le k \le r_1+\cdots+r_m$. 
Put $\Delta(z)={\rm{det}}(P_{k,i,j_i}(z))_{\substack{1\le k \le r_1+\cdots+r_m \\ 1\le i \le m,0\le j_i \le r_i-1}}.$
There exists an element $\gamma\in K\setminus\{0\}$ with $\Delta(z)=\gamma z^{(n+1)(r_1+\cdots+r_m)}$. 
In particular we have $\Delta(\alpha)\neq 0$ for $\alpha\in K\setminus\{0\}$.
\end{corollary}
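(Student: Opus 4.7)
The plan is to apply Lemma~\ref{non vanishing of det} directly to the system of power series
\[
\boldsymbol{f}=((1+z)^{\omega_i}\log^{j_i}(1+z))_{1\le i \le m,\, 0\le j_i \le r_i-1}
\]
with the base index $\boldsymbol{n}=(n,\ldots,n)\in \N^{r_1+\cdots+r_m}$. First I would check the hypothesis $f_1(0)\neq 0$: ordering the tuples $(i,j_i)$ so that the first coordinate is $(1,0)$, the corresponding function is $(1+z)^{\omega_1}$, which takes the value $1$ at $z=0$. Next I would identify the ``staircase'' indices of Lemma~\ref{non vanishing of det}, namely $(n+1,\ldots,n+1,n,\ldots,n)$ with exactly $k$ copies of $n{+}1$, with the indices $\boldsymbol{n}_k$ of Corollary~\ref{main pade}: writing $k=r_1+\cdots+r_u+s_{u+1}$ with $0\le u \le m-1$ and $1\le s_{u+1}\le r_{u+1}$, the two definitions agree, so the tuple $(P_{k,i,j_i}(z))$ constructed in Corollary~\ref{main pade} is precisely a weight-$\boldsymbol{n}_k$ Pad\'{e} approximant of $\boldsymbol{f}$, which plugs directly into the matrix appearing in Lemma~\ref{non vanishing of det}.

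The substantive step is to verify normality of $\{\boldsymbol{n}\}\cup \{\boldsymbol{n}_k\}_{1\le k \le r_1+\cdots+r_m-1}$ with respect to $\boldsymbol{f}$. Each of these indices falls within the family treated in Proposition~\ref{diagonal normality log}: for $\boldsymbol{n}$ one takes $l_i=1$, $s_{i,1}=r_i$, $n_{i,1}=n$ for every $i$; for $\boldsymbol{n}_k$ with $k=r_1+\cdots+r_u+s_{u+1}$ and $1\le s_{u+1}<r_{u+1}$ one takes $l_i=1$ for $i\neq u+1$ (with $n_{i,1}=n+1$ for $i\le u$ and $n_{i,1}=n$ for $i\ge u+2$) and $l_{u+1}=2$ with $(n_{u+1,1},n_{u+1,2})=(n+1,n)$ of multiplicities $(s_{u+1},r_{u+1}-s_{u+1})$; the boundary case $s_{u+1}=r_{u+1}$ degenerates to $l_{u+1}=1$. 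In every case Proposition~\ref{diagonal normality log} guarantees that the index is normal.

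Once the hypotheses of Lemma~\ref{non vanishing of det} are verified, it yields $\Delta(z)=\gamma z^{N}$ for some $\gamma\in K\setminus\{0\}$ with $N=(n+1)(r_1+\cdots+r_m)$, and evaluating at any $\alpha\in K\setminus\{0\}$ gives $\Delta(\alpha)=\gamma\alpha^{N}\neq 0$. I expect no serious obstacle in executing this plan; the only care needed is in the bookkeeping that matches the two indexing schemes and in confirming that the full staircase of indices $\boldsymbol{n}_k$ lies within the scope of Proposition~\ref{diagonal normality log}, so that its normality statement applies uniformly.
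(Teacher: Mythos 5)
Your proposal is correct and follows essentially the same route as the paper: normality of $\{\boldsymbol{n}\}\cup\{\boldsymbol{n}_k\}_k$ via Proposition~\ref{diagonal normality log}, then Lemma~\ref{non vanishing of det} applied to the approximants of Corollary~\ref{main pade}. Your extra bookkeeping (checking $f_1(0)\neq 0$ and exhibiting the parameters $l_i,s_{i,j},n_{i,j}$ realizing each index $\boldsymbol{n}_k$) only makes explicit what the paper leaves implicit.
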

\begin{proof}
Put $\boldsymbol{n}=(n,\ldots,n)\in \N^{r_1+\cdots+r_m}$, $\boldsymbol{n}_k=(\underbrace{n+1,\ldots,n+1}_k,n,\ldots,n)\in \N^{r_1+\cdots+r_m}$ and 
$\boldsymbol{f}=((1+z)^{\omega_i}{\rm{log}}^{j_i}(1+z))_{{1\le i \le m, 0 \le j_i \le r_i-1}}$. %for $1\le k \le  m$.
By Proposition $\ref{diagonal normality log}$, the indices $\{\boldsymbol{n}\}\cup \{\boldsymbol{n}_k\}_{1\le k \le r_1+\cdots+r_m}$ are normal with respect to 
$\boldsymbol{f}$. %$((1+z)^{\omega_i}{\rm{log}}^{j_i}(1+z))_{\substack{1\le i \le m \\ 0 \le j_i \le r_i-1}}$. 
Applying Lemma $\ref{non vanishing of det}$ for the weight $\boldsymbol{n}_k$ Pad\'{e} approximants $(P_{k,i,j_i}(z))_{{1\le i \le m, 0\le j_i \le r_i-1}}$ of
$\boldsymbol{f}$, %$((1+z)^{\omega_i}{\rm{log}}^{j_i}(1+z))_{\substack{1\le i \le m \\ 0 \le j_i \le r_i-1}}$, 
we obtain the assertion.
\end{proof}
\subsection{A linear independence criterion}
Let us provide a criterion for obtaining a measure of linear independence of given numbers.
A similar criterion for simultaneous approximants for given numbers has been established in \cite[Proposition $3$]{DHK3}.
\begin{lemma} \label{criterion}
Let $K$ be an algebraic number field and $v_0$ a place of $K$.
Let $m\in\N$ with $m\ge 2$ and $\boldsymbol{\theta}=(\theta_1,\ldots,\theta_m)\in K_{v_0}^{{{m}}}$ with $\theta_1\neq0$. 
Suppose there exist a sequence of matrices $$({{{M}}}_n)_n=\left(\left(a^{(n)}_{l,j}\right)_{1\le l,j\le m}\right)_{n}\in  {\rm{GL}}_{m}(K)^{\N}\enspace,$$ 
real number $\mathbb{A}$ and functions $\{F_v:\N\rightarrow \R_{\ge0} \}_{v\in {{\mathfrak{M}}}_K}$ with
\begin{align}
&\max_{\substack{1\le l \le m}}\left\vert \sum_{j=1}^m a^{(n)}_{l,j}\theta_j\right\vert_{v_0}\leq e^{-\mathbb{A}n+o(n)}\enspace, \label{upper Rrj}\\
&\left\Vert {{{M}}}_n\right\Vert_v \leq  e^{F_v(n)} \ \ \text{for} \ \ v\in {{\mathfrak{M}}}_K\enspace, \label{upper Anrj}
\end{align}
for $n\in \N$.  
We assume there exist \begin{align*}
&\mathbb{B}={\limsup_{n\to \infty}}\dfrac{1}{n}\left((m-1){{\sum_{v\in \mathfrak{M}_K}}} F_v(n)-F_{v_0}(n)\right), \ \ U=\limsup_{n\to \infty}\dfrac{F_{v_0}(n)}{n}\enspace.
\end{align*}
Put $V=\mathbb{A}-\mathbb{B}$ and assume $V>0$. 
Then for any $0<\varepsilon<V$, there exists an effective constant $H_0=H_0(\varepsilon)>0$ depending on $\varepsilon$ and the given data such that the following property holds.
Then, for any ${{\boldsymbol{\beta}=(\beta_1,\ldots,\beta_m)}} \in K^{m} \setminus \{ \bold{0} \}$ satisfying $H_0\le {\rm{H}}({{\boldsymbol{\beta}}})$, 
\begin{align*}
\left|\sum_{j=1}^m{{\beta_j}}\theta_j\right|_{v_0}>C(\varepsilon) H_{v_0}(\boldsymbol{\beta}) {\rm{H}}({{\boldsymbol{\beta}}})^{-\mu(\varepsilon)}\enspace,
\end{align*}
where 
$$
\mu(\varepsilon)=
\dfrac{\mathbb{A}+U}{V-\varepsilon}\enspace \mbox{ \it and }\hspace{7pt} C{{(\varepsilon)}}=\dfrac{1}{2}\exp\left[-\frac{(V-\varepsilon+\log(2))(\mathbb{B}+U+\varepsilon)}{V-\varepsilon}\right]\enspace.
$$
\end{lemma}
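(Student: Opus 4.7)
The plan is to execute a classical Nesterenko-type linear independence argument, driven by the invertibility of each $M_n$ and the product formula on $K$. Set $L := \sum_j \beta_j \theta_j$ (the quantity to bound from below) and $L_l^{(n)} := \sum_j a^{(n)}_{l,j} \theta_j$, so that hypothesis $(\ref{upper Rrj})$ gives $|L_l^{(n)}|_{v_0} \leq e^{-\mathbb{A}n + o(n)}$. Because the rows $R_1, \ldots, R_m$ of $M_n$ form a basis of $K^m$ and $\boldsymbol{\beta} \neq 0$, at least one coefficient $c_{l_0}$ in the expansion $\boldsymbol{\beta} = \sum_l c_l R_l$ is nonzero; equivalently, the determinant $\Delta$ of the matrix $M_n^{(l_0)}$ obtained by replacing row $l_0$ of $M_n$ with $\boldsymbol{\beta}$ is nonzero.

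The core manipulation is the following determinant identity. Replace column $1$ of $M_n^{(l_0)}$ by the column whose $l$-th entry is $L_l^{(n)}$ for $l \neq l_0$ and $L$ for $l = l_0$. Since this new column equals $\sum_j \theta_j \cdot (\text{column } j \text{ of } M_n^{(l_0)})$, the resulting determinant equals $\theta_1 \Delta$. Expanding it along column $1$ yields $\theta_1 \Delta = (-1)^{l_0+1} L A_{l_0} + \sum_{l \neq l_0} (-1)^{l+1} L_l^{(n)} A_l$, where the minor $A_{l_0}$ involves only rows of $M_n$, while each $A_l$ for $l \neq l_0$ contains $\boldsymbol{\beta}$ in exactly one row. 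Hadamard's inequality together with $(\ref{upper Anrj})$ then gives $|A_{l_0}|_{v_0} \leq c_m e^{(m-1)F_{v_0}(n)}$, $|A_l|_{v_0} \leq c_m H_{v_0}(\boldsymbol{\beta}) e^{(m-2)F_{v_0}(n)}$, and at any place $v$, $|\Delta|_v \leq c_m H_v(\boldsymbol{\beta}) e^{(m-1) F_v(n)}$. Applying the product formula $\prod_v |\Delta|_v = 1$ converts the latter into the crucial lower bound $|\Delta|_{v_0} \geq C_1 \cdot \dfrac{H_{v_0}(\boldsymbol{\beta})}{H(\boldsymbol{\beta})} \cdot \exp\bigl(-(m-1)\sum_{v \neq v_0} F_v(n)\bigr)$.

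For the dominance step, the identity and the triangle inequality give $|L|_{v_0} |A_{l_0}|_{v_0} \geq |\theta_1 \Delta|_{v_0} - \sum_{l \neq l_0} |L_l^{(n)} A_l|_{v_0}$. Using the algebraic identity $(m-1)\sum_{v\neq v_0} F_v(n) + (m-2)F_{v_0}(n) = (m-1)\sum_v F_v(n) - F_{v_0}(n) \leq \mathbb{B} n + o(n)$, the error sum falls below $\tfrac12|\theta_1\Delta|_{v_0}$ as soon as $V n \geq \log H(\boldsymbol{\beta}) + \log 2 + O(1) + o(n)$. I would take $n$ to be the smallest integer with $(V-\varepsilon) n \geq \log H(\boldsymbol{\beta}) + \log 2 + O(1)$, which is admissible once $H(\boldsymbol{\beta}) \geq H_0(\varepsilon)$ so the $o(n)$ term is absorbed. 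Substituting back and using $(m-1)\sum_v F_v(n) \leq (\mathbb{B}+U) n + o(n)$ yields $|L|_{v_0} \geq \dfrac{|\theta_1 \Delta|_{v_0}}{2 |A_{l_0}|_{v_0}} \geq C_3 \, H_{v_0}(\boldsymbol{\beta}) H(\boldsymbol{\beta})^{-1-(\mathbb{B}+U)/(V-\varepsilon)}$, and since $1 + (\mathbb{B}+U)/(V-\varepsilon) = (\mathbb{A} + U - \varepsilon)/(V - \varepsilon) \leq \mu(\varepsilon)$, the claimed inequality follows.

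The main difficulty is not conceptual — the determinant identity and product-formula trick are classical — but the precise bookkeeping of the $o(n)$ terms, the Hadamard constants at each place, and the round-up in choosing $n$, so that the threshold $V - \varepsilon + \log 2$ and the correction $\mathbb{B}+U+\varepsilon$ combine to reproduce the announced closed form $C(\varepsilon) = \tfrac12 \exp\bigl[-(V-\varepsilon+\log 2)(\mathbb{B}+U+\varepsilon)/(V-\varepsilon)\bigr]$, and so that the resulting $H_0(\varepsilon)$ is effectively computable from $\mathbb{A}$, $\mathbb{B}$, $U$, the $F_v$, and $\theta_1$.
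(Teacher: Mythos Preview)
Your proposal is correct and follows essentially the same route as the paper's proof: both replace one row of $M_n$ by $\boldsymbol{\beta}$ to obtain a nonzero determinant in $K$, expand it along the first column after substituting the linear forms, apply the product formula, and choose $n$ minimal with $(V-\varepsilon)n \ge \mathrm{h}(\boldsymbol{\beta})+\log 2$. The only cosmetic difference is that the paper normalizes $\theta_1=1$ and applies the product formula directly to the split $|B_n|_{v_0}\le |\text{error}|_{v_0}+|\Lambda\cdot\text{cofactor}|_{v_0}$, whereas you first isolate a lower bound for $|\Delta|_{v_0}$ and then show the error term is at most $\tfrac12|\theta_1\Delta|_{v_0}$; this is the same argument reorganized.
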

\begin{proof}
We may assume $\theta_1=1$.
Let $\boldsymbol{\beta}=(\beta_1,\ldots,\beta_m)\in K^{m}\setminus \{\bold{0} \}$. 
Define 
$\Lambda({{\boldsymbol{\beta}}},\boldsymbol{\theta})={\sum_{j=1}^m}\beta_j\theta_j$.
Since ${{{M}}}_n$ is invertible and ${{\boldsymbol{\beta}}}\neq \bold{0}$, there exists $1\le l_n \le m$ with the value 
$$B_{n}={\rm{det}}
                    {\begin{pmatrix}
                     a^{(n)}_{1,1}& a^{(n)}_{1,2} & \dots &a^{(n)}_{1,m}\\
                     \vdots & \vdots & \ddots  &\vdots \\
                     \beta_1& \beta_1 & \dots & \beta_m\\
                     \vdots & \vdots & \ddots  &\vdots \\
                     a^{(n)}_{m,1} & a^{(n)}_{m,2} & \dots & a^{(n)}_{m,m}\\
                     \end{pmatrix}}\enspace,
$$
does not vanish where the vector $\boldsymbol{\beta}$ is in position the $l_n$-th line.
Put the $(l,1)$-th cofactor of the above matrix by $B_{n,l,1}$ and $r^{(n)}_{l}={\sum_{j=1}^m}a^{(n)}_{l,j}\theta_j$ for $1\le l \le m$.
The properties of determinant yields $$B_n=\sum_{l\neq l_n} r^{(n)}_l B_{n,l,1}+\Lambda(\boldsymbol{\beta},\boldsymbol{\theta})B_{n,l_n,1}\enspace.$$
Applying the product formula to $B_{n}\in K\setminus\{0\}$, 
\begin{equation} \label{upper infty}
1\le \prod_{v\neq v_0}\left\vert B_{n}\right\vert_v\cdot \left(\left\vert \sum_{l\neq l_n} r^{(n)}_l B_{n,l,1}\right\vert_{v_0}+\left\vert\Lambda(\boldsymbol{\beta},\boldsymbol{\theta})B_{n,l_n,1} \right\vert_{v_0}\right)\enspace.
\end{equation}
We first evaluate using inequalities $(\ref{upper Rrj})$ and $(\ref{upper Anrj})$
\begin{align*} %\label{|B|_v}
&\log\left\vert B_{n} \right\vert_{v}\leq {\rm{h}}_v({{\boldsymbol{\beta}}})+(m-1){{F_v}}(n)+{\varepsilon_v \log\, (m!)}\ \ \text{for} \ \ v\in {{\mathfrak{M}}}_K\enspace,\\
&\log\left\vert \sum_{l\neq l_n} r^{(n)}_l B_{n,l,1}\right\vert_{v_0}\le -\mathbb{A}n+(m-2)F_{v_0}(n)+{\rm{h}}_{v_0}(\boldsymbol{\beta})+o(n)\enspace,\\
&\log\left\vert \Lambda(\boldsymbol{\beta},\boldsymbol{\theta})B_{n,l_n,1}\right\vert_{v_0}\le \log |\Lambda(\boldsymbol{\beta},\boldsymbol{\theta})|_{v_0}+(m-1)F_{v_0}(n)+\varepsilon_{v_0}\log(m-1)!\enspace.
\end{align*}
Now we fix a positive number $\varepsilon$ with $V>\varepsilon$, and assume that $n$ is large enough so that 
$\left\vert o(n)\right\vert\leq\varepsilon n/4$, $[K:\Q]\cdot {\rm{log}}(m!)\le \varepsilon n /4$, $0<-\log(2)+n(V-\varepsilon)$ and 
\begin{align*}
&\left\vert \left((m-1){{\sum_{v\in \mathfrak{M}_K}}} F_v(n)-F_{v_0}(n)\right)-n\mathbb{B}\right\vert\leq \varepsilon n/4, \ \ \ \ \left|nU-F_{v_0}(n)\right|\le \varepsilon n/4\enspace.
\end{align*}
Fix such $n^*$ and take $h_0=-\log(2)+n^*(V-\varepsilon)$ and $\boldsymbol{\beta}\in K^{m+1}\setminus \{\bold{0}\}$ with ${\rm{h}}(\boldsymbol{\beta})\ge h_0$. 
Let $n=n(\boldsymbol{\beta})$ be the minimal positive integer such that
${\rm{h}}({{\boldsymbol{\beta}}})-n(V-\varepsilon)\leq -{\rm{log}}(2)$. 
Note that $n\ge n^*$ and 
\begin{align} \label{ineq n}
n<\dfrac{{\rm{h}}({{\boldsymbol{\beta}}})}{V-\varepsilon}+\dfrac{\log(2)}{V-\varepsilon}+1\enspace.
\end{align}
With these conventions,
\begin{align*}
\sum_{v\neq v_0}\log\vert B_{n}\vert_v+\log\left\vert \sum_{l\neq l_n} r^{(n)}_l B_{n,l,1}\right\vert_{v_0}&\le-\mathbb{A}n+(m-1)\sum_{v\in\mathfrak{M}_K}F_v(n)-F_{v_0}(n)+{\rm{h}}(\boldsymbol{\beta})+\varepsilon n/2\\
&\le n(-\mathbb{A}+\mathbb{B}+\varepsilon)+{\rm{h}}(\boldsymbol{\beta})\\
&=-n(V-\varepsilon)+{\rm{h}}(\boldsymbol{\beta})<-\log(2) \enspace,
\end{align*}
and 
\begin{align*}
\sum_{v\neq v_0}\log\vert B_{n}\vert_v+\log\left\vert\Lambda(\boldsymbol{\beta},\boldsymbol{\theta})B_{n,l_n,1} \right\vert_{v_0}&\le \log|\Lambda(\boldsymbol{\beta},\boldsymbol{\theta})|_{v_0}+(m-1)\sum_{v\in \mathfrak{M}_K}F_v(n)+{\rm{h}}(\boldsymbol{\beta})-{\rm{h}}_{v_0}(\boldsymbol{\beta})+\varepsilon n/2\\
&\le \log|\Lambda(\boldsymbol{\beta},\boldsymbol{\theta})|_{v_0}+n(\mathbb{B}+U+\varepsilon)+{\rm{h}}(\boldsymbol{\beta})-{\rm{h}}_{v_0}(\boldsymbol{\beta})\enspace.
\end{align*}
Plugging in this inequality into Equation \eqref{upper infty},
\begin{align*}
-\log(2)-n(\mathbb{B}+U+\varepsilon)-{\rm{h}}(\boldsymbol{\beta})+{\rm{h}}_{v_0}(\boldsymbol{\beta})\le \log|\Lambda(\boldsymbol{\beta},\boldsymbol{\theta})|_{v_0}\enspace.
\end{align*}
Using Equation $(\ref{ineq n})$,
\begin{align*}
\log|\Lambda(\boldsymbol{\beta},\boldsymbol{\theta})|_{v_0}&\ge -\log(2)-\left(\dfrac{{\rm{h}}(\boldsymbol{\beta})+\log(2)}{V-\varepsilon}+1\right)(\mathbb{B}+U+\varepsilon)-{\rm{h}}(\boldsymbol{\beta})+{\rm{h}}_{v_0}(\boldsymbol{\beta})\\
&=\log C(\varepsilon)-\mu(\varepsilon){\rm{h}}(\boldsymbol{\beta})+{\rm{h}}_{v_0}(\boldsymbol{\beta})\enspace.
\end{align*}
This completes the proof of this lemma.
\end{proof}

\subsection{Proof of Main theorem}
We are ready to prove Theorem $\ref{power of log indep}$.

\begin{proof} [\textbf{Proof of Theorem $\ref{power of log indep}$}]
We use the same notation as in Theorem $\ref{power of log indep}$.
Let $\alpha\in K\setminus \{0,-1\}$.
For a non-negative integer $n$, we define matrix $${{M}}_n=(D_{n+1}\alpha^{n+1}P_{k,i,j_i}(\alpha^{-1}))_{\substack{1\le k \le {\sum_{i=1}^m}r_i \\ 1\le i \le m, 0\le j_i\le r_i-1}}\in {\rm{Mat}}_{r_1+\cdots+r_m}(K)\enspace,$$
where $D_{n+1}$ is the positive integer defined in $(\ref{E_n})$. 
By Corollary $\ref{key corollary}$, the matrices ${{M}}_n$ are invertible for every $n$. 
We define a family of functions 
\begin{align*}
F_v:\N\longrightarrow \R_{\ge0}; \ \ n\mapsto  &(n+1){\rm{h}}_v(\alpha,\alpha+1)+\varepsilon_v o(n)+
\dfrac{\varepsilon_v(r_1+\cdots+r_m)[K_v:\R]n}{[K:\Q]}\log (2)\\
&+\varepsilon_v r \log\left|\left[\prod_{1\le i_1<i_2\le m} D_{n+2}(\omega_{i_2}-\omega_{i_1})^2d_{n+2}(\omega_{i_2}-\omega_{i_1})\right]d_{n+1}\right|_v\enspace,
\end{align*} 
for $v\in \mathfrak{M}_K$.
For $\omega\in \Q\setminus \Z_{<0}$, by \cite{BKS}, 
$$
\limsup_{n\to \infty}\dfrac{1}{n}\log\, \max\{D_n(\omega), d_n(\omega)\}\le \dfrac{{\rm{den}}(\omega)}{\varphi({\rm{den}}(\omega))}\sum_{\substack{1\le j \le {\rm{den}}(\omega) \\ (j,{\rm{den}}(\omega))=1}}\dfrac{1}{j}\enspace,
$$
where $\varphi$ is Euler's totient function.
Therefore,
\begin{align*}
&\limsup_{n\to \infty}\dfrac{1}{n}\left( (r_1+\cdots+r_m-1)\sum_{v\in \mathfrak{M}_K}F_v(n)-F_{v_0}(n)\right)=\mathbb{B}_{v_0}(\boldsymbol{\omega},\alpha)\enspace,\\
&\limsup_{n\to \infty}\dfrac{F_{v_0}(n)}{n}=U_{v_0}(\boldsymbol{\omega},\alpha)\enspace,
\end{align*}
and, by Lemma $\ref{absolute value P}$,
\begin{align*}
\max_{\substack{1\le k \le r_1+\cdots+r_m \\ 1\le i \le m, 1\le j_i \le r_i-1}}\log|D_{n+1}\alpha^{n+1}P_{k,i,j_i}(\alpha^{-1})|_v\le F_{v}(n) \ \ \text{for} \ \ v\in \mathfrak{M}_K\enspace.
\end{align*}
Using Lemma $\ref{reminder term}$, 
\begin{align*}
\max_{1\le k \le r_1+\cdots+r_m} \log|D_{n+1}\alpha^{n+1}R_{k}(\alpha^{-1})|_{v_0}\le -\mathbb{A}_{v_0}(\boldsymbol{\omega},\alpha)n+o(n) \enspace.
\end{align*}
Applying Lemma $\ref{criterion}$ for the family of invertible matrices $({{M}}_n)_n$ and using above estimates, we obtain Theorem $\ref{power of log indep}$.
\end{proof}

\section{Appendix: perfectness of polylogarithms}
\subsection{Pad\'{e} approximants of Laurent series}
In this subsection, let us introduce the Pad\'{e} approximation of a Laurent series which is slightly different concept from the Pad\'{e} approximations of power series considered in Section $3$.
Throughout this subsection, we denote by $K$ a field of characteristic $0$.
We denote the formal power series ring of variable $1/z$ with coefficients $K$ by $K[[1/z]]$ and the field of fractions by $K((1/z))$. We say an element of $K((1/z))$ is a formal Laurent series.
Let $f(z)=\sum_{k=0}^{\infty}f_k/z^{k+1}\in (1/z)\cdot K[[1/z]]$ and $n$ a non-negative integer.
We define a $K$-homomorphism $\varphi_f$ associated with $f$ by
$$\varphi_f:K[t]\longrightarrow K; \ \ t^k\mapsto f_k \ \ (k\ge 0)\enspace.$$ 
\begin{definition} \label{Pade infty}
Let $(P,Q)\in K[z]^2$ be a pair of polynomials. 
We say the pair $(P,Q)$ is a weight $n$ Pad\'{e} approximants of $f(z)$ if $(P,Q)$ satisfies
\begin{align*}
&(i) \ \ P(z)\neq 0\enspace,\\
&(ii) \ \  {\rm{deg}}\, P\le n\enspace,\\
&(iii) \ \ {\rm{ord}}_{\infty} \, P(z)f(z)-Q(z)\ge n+1\enspace.
\end{align*}
We denote the set of weight $n$ Pad\'{e} approximants of $f$ by ${\rm{PA}}_n(f)$.
Let $(P,Q)\in {\rm{PA}}_n(f)$. 
Note that the polynomial $Q$ is uniquely determined such as $Q(z)=\varphi_f(\tfrac{P(z)-P(t)}{z-t})$.
We denote the reminder function $P(z)f(z)-Q(z)$ of $(P,Q)\in {\rm{PA}}_n(f)$ by $R_P(z)$.
\end{definition}
\begin{definition}
Let $n$ be a non-negative integer. 
We say $n$ is normal with respect to $f$ if and only if, for any $(P,Q)\in {\rm{PA}}_n(f)$, we have ${\rm{ord}}_{\infty}\,R_P=n+1$.
We denote the set of non-negative integers which are normal with respect to $f$ by $\Lambda(f)$.
\end{definition}
\begin{remark}
Let us describe the relationship between Pad\'{e} approximants defined in Definition $\ref{Pade infty}$ and that in Definition $\ref{Pade}$.
Let $f(z)\in (1/z)\cdot K[[1/z]]$. 
To give a pair in ${\rm{PA}}_n(f)$ is equivalent to give a weight $(n,n-1)$ Pad\'{e} approximants of $(1/z\cdot f(1/z),1)$. 
%For $(P,Q)\in {\rm{PA}}_n(f)$, the pair of polynomials $(z^nP(1/z),-z^nQ(1/z))$ is a weight $(n,n)$ Pad\'{e} approximants of $(f(1/z),1)$.
More precisely, the map 
\begin{align*}
& \ \ \ \ \ {\rm{PA}}_n(f)\longrightarrow \{(P_1,P_2)\in K[z]^2\mid (P_1,P_2) \ \text{is a weight} \ (n,n-1) \ \text{Pad\'{e} approximants of} \ (1/z\cdot f(1/z),1)\};\\
& (P(z),Q(z))\mapsto (z^nP(1/z),-z^{n-1}Q(1/z))\enspace,
\end{align*}
is bijective. This yields that a non-negative integer $n$ is normal with respect to $f$ is equivalent to the pair $(n,n-1)$ is normal with respect to $(1/z\cdot f(1/z),1)$ in the sense of Definition $\ref{Pade 0}$.
\end{remark}
\begin{definition}
Let $m,n$ be non-negative integers. Define the $(m,n)$-Hankel matrix of $f$ by
$$H_{m,n}(f)=
\begin{pmatrix} 
f_0 & f_1 & \cdots & f_n\\
f_1 & f_2 & \ldots & f_{n+1}\\
\vdots & \vdots & \ddots & \vdots\\
f_{m-1} & f_{m} & \ldots & f_{m+n}
\end{pmatrix}\in {\rm{Mat}}_{m,n+1}(K)\enspace.
$$
\end{definition}
The similar argument to that in Remark $\ref{remark bij}$ leads us to get the following equivalence relations. 
\begin{lemma}\label{equiv}
Let $P(z)=\sum_{k=0}^np_{k}z^k\in K[z]$ be a nonzero polynomial. Put $Q(z)=\varphi_f(\tfrac{P(z)-P(t)}{z-t})$.
For a non-negative integer $n$, the following are equivalent.

$(i)$ We have $(P,Q)\in {\rm{PA}}_n(f)$.

$(ii)$ The vector ${}^t(p_0,\ldots,p_n)\in K^{n}$ is contained in the kernel of the $K$-linear map $$H_{n,n}(f):K^{n+1}\longrightarrow K^{n}; \ \ \vec{x}\mapsto H_{n,n}(f)\cdot \vec{x}\enspace.$$
\end{lemma}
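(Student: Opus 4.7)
The plan is to unwind each condition in the definition of a weight-$n$ Pad\'e approximant into a concrete system of linear equations on the coefficients $p_0,\dots,p_n$, and then identify that system with the matrix equation $H_{n,n}(f)\cdot{}^t(p_0,\dots,p_n)=\vec 0$. Since $P$ is assumed nonzero and $\deg P\le n$ holds by definition of the $p_k$, conditions $(i)$--$(ii)$ of Definition~\ref{Pade infty} are automatic, so the only thing that requires proof is that the order condition $(iii)$ is equivalent to membership in $\ker H_{n,n}(f)$.

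First I would verify that the polynomial $Q(z)=\varphi_f\!\left(\tfrac{P(z)-P(t)}{z-t}\right)$ is exactly the polynomial part of $P(z)f(z)$. Writing
\[
\frac{P(z)-P(t)}{z-t}=\sum_{k=1}^{n}p_k\sum_{j=0}^{k-1}z^{k-1-j}t^{j},
\]
and applying $\varphi_f$ (which sends $t^j\mapsto f_j$), one obtains $Q(z)=\sum_{k=1}^{n}p_k\sum_{j=0}^{k-1}f_j z^{k-1-j}$. On the other hand, multiplying out
\[
P(z)f(z)=\sum_{k=0}^{n}p_k z^k\sum_{j=0}^{\infty}\frac{f_j}{z^{j+1}},
\]
the terms with $k-j-1\ge 0$ (i.e.\ $j\le k-1$) reproduce exactly $Q(z)$. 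Hence $R_P(z):=P(z)f(z)-Q(z)\in (1/z)\,K[[1/z]]$.

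Next I would compute the explicit $1/z$-expansion of $R_P$. Collecting the remaining terms (those with $j\ge k$) and substituting $\ell=j-k$, one finds
\[
R_P(z)=\sum_{\ell=0}^{\infty}\Bigl(\sum_{k=0}^{n}p_k f_{k+\ell}\Bigr)\frac{1}{z^{\ell+1}}.
\]
Therefore ${\rm ord}_\infty R_P\ge n+1$ if and only if the coefficient of $1/z^{\ell+1}$ vanishes for every $\ell=0,1,\dots,n-1$, that is
\[
\sum_{k=0}^{n}p_k f_{k+\ell}=0\qquad(\ell=0,1,\dots,n-1).
\]
Reading these $n$ equations as rows, this is precisely $H_{n,n}(f)\cdot{}^t(p_0,\dots,p_n)=\vec 0$, which gives $(i)\Leftrightarrow(ii)$.

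There is no real obstacle here beyond bookkeeping: the only care required is (a) matching the two formulas for $Q$ so that the polynomial part of $P(z)f(z)$ is indeed killed, and (b) aligning the index shift $\ell=j-k$ with the row convention of $H_{n,n}(f)$ as displayed in the definition (rows indexed by $\ell=0,\dots,n-1$, columns by $k=0,\dots,n$). Since the Hankel pattern of $H_{n,n}(f)$ matches exactly the matrix of the linear forms $\sum_k p_k f_{k+\ell}$, the equivalence falls out immediately once both sides have been expanded.
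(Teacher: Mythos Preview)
Your proof is correct and follows precisely the approach the paper indicates: the paper does not actually write out a proof here but only says ``the similar argument to that in Remark~\ref{remark bij} leads us to get the following equivalence relations,'' and your expansion of $P(z)f(z)$, identification of $Q$ as its polynomial part, and matching of the tail coefficients $\sum_k p_k f_{k+\ell}$ with the rows of $H_{n,n}(f)$ is exactly that argument, specialized from the generalized Hankel setting of Remark~\ref{remark bij} to the Laurent-series case.
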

The similar argument to the proof of Lemma \ref{cor fund Pade} leads us to conclude that the given non-negative integer $n$ being normal with respect to $f(z)$ ({\it confer} \cite{N-S}).
\begin{lemma}\label{equiv2}
For $f(z)\in (1/z)\cdot K[[1/z]]$ and a non-negative integer $n$ the following are equivalent.

$(i)$ $n\in \Lambda(f)$. 

$(ii)$ ${\rm{det}}\,H_{n+1,n}(f)\neq 0$.
\end{lemma}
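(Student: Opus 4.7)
The plan is to translate the normality condition in the definition of $\Lambda(f)$ directly into the statement that the square Hankel matrix $H_{n+1,n}(f)$ has trivial kernel, and then invoke the equivalence, for square matrices, between having trivial kernel and having nonzero determinant.

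First I would expand the remainder coefficients explicitly. Writing $P(z)=\sum_{j=0}^n p_j z^j$ and $f(z)=\sum_{k\ge 0} f_k z^{-k-1}$, direct multiplication gives
\[
P(z)f(z)=Q(z)+\sum_{\ell\ge 1} c_\ell(P)\,z^{-\ell},\qquad c_\ell(P)=\sum_{j=0}^n p_j f_{j+\ell-1},
\]
where $Q(z)$ is the polynomial part, uniquely recovered as $Q(z)=\varphi_f((P(z)-P(t))/(z-t))$. Consequently $\mathrm{ord}_\infty R_P\ge n+\nu$ is equivalent to the vanishing of $c_1(P),\ldots,c_{n+\nu-1}(P)$. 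For $\nu=1$ this recovers exactly the linear system $H_{n,n}(f)\,\vec p=0$ of Lemma~\ref{equiv}; for $\nu=2$ it adds the single further equation $c_{n+1}(P)=\sum_{j=0}^n p_j f_{j+n}=0$, which is precisely the row $(f_n,f_{n+1},\ldots,f_{2n})$ adjoined to $H_{n,n}(f)$ to form $H_{n+1,n}(f)$.

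With this dictionary in place, $n\in\Lambda(f)$ is by definition the statement that no nonzero $(P,Q)\in\mathrm{PA}_n(f)$ satisfies $\mathrm{ord}_\infty R_P\ge n+2$, and by the previous paragraph this is the same as saying that no nonzero vector in $K^{n+1}$ lies in $\ker H_{n+1,n}(f)$. Since $H_{n+1,n}(f)\in\mathrm{Mat}_{n+1,n+1}(K)$ is a square matrix, the triviality of its kernel is equivalent to $\det H_{n+1,n}(f)\neq 0$, which yields the desired equivalence.

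The only genuine task is the coefficient bookkeeping in the first step: once one verifies that augmenting $H_{n,n}(f)$ by the row indexed by $c_{n+1}$ produces exactly $H_{n+1,n}(f)$, the remainder of the argument is elementary linear algebra, parallel to the one already used in Remark~\ref{remark bij} and Lemma~\ref{cor fund Pade} for the classical Pad\'e setting. No new ideas beyond Lemma~\ref{equiv} are required.
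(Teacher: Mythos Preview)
Your proof is correct and follows the same line of reasoning the paper indicates: the paper does not spell out a proof of this lemma but simply remarks that it follows by the same argument as in Lemma~\ref{cor fund Pade} (and implicitly Remark~\ref{remark bij}), i.e., by translating the normality condition into a statement about the kernel of the relevant Hankel matrix. Your explicit computation of the coefficients $c_\ell(P)=\sum_{j=0}^n p_j f_{j+\ell-1}$ and the identification of $H_{n+1,n}(f)\vec p=0$ with $c_1=\cdots=c_{n+1}=0$ is exactly the bookkeeping needed to make that sketch precise, and nothing is missing.
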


\subsection{Perfectness of polylogarithm function}
Let $r$ be a positive integer. We denote the $r$-th polylogarithm function by ${\rm{Li}}_r(z)$, namely
$${\rm{Li}}_r(z)=\sum_{k=0}^{\infty}\dfrac{z^{k+1}}{(k+1)^r}\in \Q[[z]]\enspace.$$
In this subsection, we prove the perfectness of ${\rm{Li}}_r(1/z)$.
Denote the $(n,n-1)$-Hankel matrix of ${\rm{Li}}_r(1/z)$ by
$$H^{(r)}_n=
\begin{pmatrix}  
1 & 1/2^r & \cdots & 1/n^r\\
1/2^r & 1/3^r & \ldots & 1/(n+1)^r\\
\vdots & \vdots & \ddots & \vdots\\
1/n^r & 1/(n+1)^r & \ldots & 1/(2n)^r
\end{pmatrix}\enspace.
$$
\begin{proposition} \label{non zero}
${\rm{det}}\,H^{(r)}_{n}\neq 0.$
\end{proposition}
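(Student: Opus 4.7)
The plan is to recognize $H^{(r)}_n$ as the Gram matrix of the monomials $1, t, \ldots, t^{n-1}$ with respect to a positive-definite inner product on $\R[t]$, from which the non-vanishing of $\det H^{(r)}_n$ is immediate.

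First I would invoke the classical identity
$$\dfrac{1}{k^r}=\dfrac{1}{(r-1)!}\int_0^1 t^{k-1}(-\log t)^{r-1}\,dt\qquad (k\ge 1,\ r\ge 1),$$
obtained from the gamma-function representation $k^{-r}=\Gamma(r)^{-1}\int_0^\infty u^{r-1}e^{-ku}\,du$ by the substitution $u=-\log t$. Applied to each entry this yields
$$(H^{(r)}_n)_{i,j}=\dfrac{1}{(i+j-1)^r}=\dfrac{1}{(r-1)!}\int_0^1 t^{i-1}\cdot t^{j-1}\cdot(-\log t)^{r-1}\,dt,$$
so $H^{(r)}_n$ is the Gram matrix of the system $\{t^{i-1}\}_{1\le i\le n}$ for the symmetric bilinear form
$$\langle f,g\rangle_r:=\dfrac{1}{(r-1)!}\int_0^1 f(t)g(t)(-\log t)^{r-1}\,dt \ \ \text{on}\ \R[t].$$

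Next I would verify that $\langle\cdot,\cdot\rangle_r$ is positive definite. The weight $(-\log t)^{r-1}$ is non-negative on $(0,1)$ and vanishes only at $t=1$; hence $\langle f,f\rangle_r=0$ forces $f\equiv 0$ on $(0,1)$, which makes $f$ the zero polynomial. Since the monomials $1,t,\ldots,t^{n-1}$ are linearly independent in $\R[t]$, the Gram matrix of any finite linearly independent family against a positive-definite form is itself positive definite, so in particular $\det H^{(r)}_n>0$.

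I do not anticipate any serious obstacle; the only routine point is convergence of the integrals, which follows from $\int_0^1(-\log t)^{r-1}\,dt=(r-1)!$. Combined with Lemma $\ref{equiv2}$ and the identification $H^{(r)}_n = H_{n,n-1}({\rm{Li}}_r(1/z))$, the proposition will then yield that every non-negative integer is normal with respect to ${\rm{Li}}_r(1/z)$, establishing the perfectness of ${\rm{Li}}_r(1/z)$ announced in the title of this subsection.
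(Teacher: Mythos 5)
Your argument is correct. The key ingredient is the same as in the paper, namely the moment representation
\[
\frac{1}{(k+1)^r}=\frac{1}{(r-1)!}\int_0^1 t^{k}\,(-\log t)^{r-1}\,dt ,
\]
which exhibits the entries of $H^{(r)}_n$ as moments of the positive weight $(-\log t)^{r-1}/(r-1)!$ on $[0,1]$; the paper introduces exactly this via the functionals $\varphi^{(i)}_r$. Where you diverge is in the concluding step: the paper pushes the determinant through the functionals, obtains a multiple integral of a Vandermonde-type product, and symmetrizes over $\mathfrak{S}_n$ (the Heine--Andreief device) to land on $\int_{D_{\mathrm{id}}}\bigl[\prod_{i<j}(t_j-t_i)\bigr]^2\prod_i\log(1/t_i)\,dt_i>0$, whereas you invoke the abstract fact that the Gram matrix of a linearly independent family with respect to a positive-definite bilinear form is positive definite. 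Your route is shorter and more conceptual, and it makes the positivity $\det H^{(r)}_n>0$ (not merely non-vanishing) equally transparent without any combinatorics; the paper's explicit symmetrization has the mild advantage of producing a closed multiple-integral formula for the determinant, but for the purpose of Proposition \ref{non zero} the two arguments deliver the same conclusion. The only point to make explicit in your write-up is that positive definiteness of the form need only be checked on polynomials of degree at most $n-1$, which your vanishing argument on $(0,1)$ already covers.
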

\begin{proof} Let $t_1,\ldots,t_n$ be variables. 
Define $\Q$-linear maps: 
$$\varphi^{(i)}_r:\Q[t_i]\longrightarrow \Q; \ \ t^k_i\mapsto \dfrac{1}{(r-1)!}\int^1_0t^k_i\log^{r-1}\left(\dfrac{1}{t_i}\right)dt_i=\dfrac{1}{(k+1)^r} \ \ \ (1\le i \le n)\enspace.$$
According to the definition of $\varphi^{(i)}_r$, 
\begin{align*}
{\rm{det}}\,H^{(r)}_{n}&=\begin{vmatrix} 
\varphi^{(1)}_r(1) & \varphi^{(2)}_r(t_2) & \cdots & \varphi^{(n)}_r(t^{n-1}_n)\\
\varphi^{(1)}_r(t_1) & \varphi^{(2)}_r(t^2_2) & \cdots & \varphi^{(n)}_r(t^{n}_n)\\
\vdots & \vdots & \ddots & \vdots\\
\varphi^{(1)}_r(t^{n-1}_1) & \varphi^{(2)}_r(t^n_2) & \cdots & \varphi^{(n)}_r(t^{2n-1}_n)\\
\end{vmatrix}
=\varphi^{(1)}_r\circ\cdots \circ \varphi^{(n)}_r\left(
\begin{vmatrix} 
1 & t_2 & \cdots & t^{n-1}_n\\
t_1 & t^2_2 & \cdots & t^{n}_n\\
\vdots & \vdots & \ddots & \vdots\\
t^{n-1}_1 & t^n_2 & \cdots & t^{2n-1}_n\\
\end{vmatrix}\right)\\
&=\varphi^{(1)}_r\circ\cdots \circ \varphi^{(n)}_r\left(t_2t^2_3\cdots t^{n-1}_n\prod_{1\leq i<j\leq n}(t_{j}-t_{i})\right)\\
&=\dfrac{1}{[(r-1)!]^n}\int_{[0,1]^n}t_2t^2_3\cdots t^{n-1}_n\prod_{1\leq i<j\leq n}(t_{j}-t_{i})\prod_{i=1}^n\log\left(\dfrac{1}{t_i}\right)dt_i\enspace.
\end{align*}
We denote the $n$-th symmetric group by $\mathfrak{S}_n$ and put
\begin{align*}
&D_{\sigma}=\{(t_1,\ldots ,t_n)\in [0,1]^n \mid t_{\sigma(1)}\le \cdots \le t_{\sigma(n)}\} \ \text{for} \ \sigma\in \mathfrak{S}_n\enspace.
\end{align*}
Denoting $g(\boldsymbol{t})=t_2t^2_3\cdots t^{n-1}_n\prod_{1\leq i<j\leq n}(t_{j}-t_{i})\prod_{i=1}^n\log\left(1/t_i\right)$ and $d\mu$ of Lebesgue measure on $[0,1]^n$ 
and using  the change of variables
$\sigma^{-1}:\kern5pt
D_{{\rm{id}}} \longrightarrow D_{\sigma},  \ (t_1,\ldots,t_n) \mapsto (t_{\sigma^{-1}(n)}, \ldots, t_{\sigma^{-1}(n)})\enspace,
$
$$
\int_{[0,1]^n} g d\mu=\int_{D_{{\rm{id}}}}\sum_{\sigma\in \mathfrak{S}_n} g\circ\sigma^{-1} d\mu \enspace.
$$
Combining $\prod_{1\leq i<j\leq n}(t_{\sigma^{-1}(j)}-t_{\sigma^{-1}(i)})={\rm{sgn}}(\sigma)\prod_{1\leq i<j\leq n}(t_{j}-t_{i})$ and remark (Vandermonde)
yields 
\begin{align*}
{\rm{det}}\,H^{(r)}_{n}&=\dfrac{1}{[(r-1)!]^n}\int_{D_{{\rm{id}}}}\sum_{\sigma\in \mathfrak{S}_n}{\rm{sgn}}(\sigma) t^0_{\sigma(1)}t_{\sigma(2)}\cdots t^{n-1}_{\sigma(n)}\cdot \prod_{1\leq i<j\leq n}(t_{j}-t_{i})\prod_{i=1}^n\log\left(\dfrac{1}{t_i}\right)dt_i\\
&=\dfrac{1}{[(r-1)!]^n}\int_{D_{{\rm{id}}}} \left[\prod_{1\leq i<j\leq n}(t_{j}-t_{i})\right]^2 \prod_{i=1}^n\log\left(\dfrac{1}{t_i}\right)dt_i >0\enspace.
\end{align*}
This completes the proof of the proposition.
\end{proof}
Combining Proposition $\ref{non zero}$ and Lemma $\ref{equiv2}$ yields the perfectness of ${\rm{Li}}_r(1/z)$. 

\

{\bf Acknowledgements.}

This work is partially supported by the Research Institute for Mathematical Sciences, an international joint usage and research center located at Kyoto University.
The author is supported by JSPS KAKENHI Grant Number JP24K16905.
The author deeply thanks N.~Hirata-Khono for her enlightening comments on a preliminary version.

\bibliography{}

\medskip\vglue5pt
\vskip 0pt plus 1fill
\hbox{\vbox{\hbox{Makoto \textsc{Kawashima}}
\hbox{Institute for Mathematical Informatics, Meiji Gakuin University}
%\hbox{College of Industrial Engineering, Nihon University,}
\hbox{1518 Kamikurata-chyo Totsuka-ku Yokohama Kanagawa}
\hbox{244-8539, Japan}
\hbox{{\tt kawashima\_makoto@mi.meijigakuin.ac.jp}}
}}

%\medskip\vglue5pt
%\vskip 0pt plus 1fill
%\hbox{\vbox{\hbox{Makoto \textsc{Kawashima}}
%\hbox{Department of Mathematical Informatics, Meijigakuin University,}
%\hbox{College of Industrial Engineering, Nihon University,}
%\hbox{1518 Kamikurata-chyo Totsuka-ku Yokohama-shi Kanagawa,}
%\hbox{224-8539, Japan}
%\hbox{{\tt kawashima\_makoto@mi.meijigakuin.ac.jp}}
%}}

\end{document}